\documentclass[12pt]{article}
\usepackage[utf8]{inputenc}
\usepackage[english]{babel}
\usepackage{amsthm}
\usepackage{amsmath}
\usepackage{amsfonts}
\usepackage{mathtools}
\usepackage{enumitem}
\usepackage{xcolor}
\usepackage{stmaryrd}
\usepackage{amssymb}
\usepackage{mathrsfs}

\setlist[enumerate,1]{label = (\roman*)}

\theoremstyle{definition}
\newtheorem{definition}{Definition}[section]

\newtheorem{theorem}[definition]{Theorem}
\newtheorem{lemma}[definition]{Lemma}

\newtheorem{fact}[definition]{Fact}

\newcommand{\wins}{\mathbin{\raisebox{0.8\depth}{$\uparrow$}}}

\newcommand{\playerone}{\mathrm{\mathbf{I}}}
\newcommand{\playertwo}{\mathrm{\mathbf{II}}}
\DeclareMathOperator{\EF}{EF}
\newcommand{\EFdep}{\EF^{\mathrm{dep}}}
\newcommand{\winsu}{\wins_{\!\textrm u}}
\newcommand{\W}{\mathcal W}
\newcommand{\U}{\mathcal U}

\newcommand{\A}{\mathfrak{A}}
\newcommand{\B}{\mathfrak{B}}

\DeclareMathOperator{\qr}{qr}
\DeclareMathOperator{\ar}{ar}

\newcommand{\dep}{\mathop{=}}

\newcommand{\posEquiv}{\mathbin{\Rrightarrow}}

\newcommand{\deplog}{\mathcal D}

\DeclareMathOperator{\Pow}{\mathcal{P}}
\DeclareMathOperator{\dom}{dom}

\def\ma{\A}
\def\mb{\B}
\title{A New Ehrenfeucht--Fraïssé Game for Dependence Logic}
\author{Joni Puljujärvi\thanks{The first author was supported by U.K. EPSRC Research Fellowship EP/V040944/1, Resources in Computation.}\\University College London\\ London, UK \and  Jouko Väänänen\thanks{The second author has received funding from the Academy of Finland (decision number 368671) and the European Research Council (ERC) under the European Union’s Horizon 2020 research and innovation programme (grant agreement No 101020762).}\\ University of Helsinki\\ Helsinki, Finland}
\begin{document}
\maketitle

\begin{abstract}
    We define a new Ehrenfeucht--Fraïssé game for dependence logic. The previously known rendition of such a game  was based on moves that are teams. Since teams can be massive, making team moves  may be quite complicated. To remedy this, our new Ehrenfeucht--Fraïssé game for dependence logic has only moves that consist of single elements, as in the classical Ehrenfeucht--Fraïssé game of first order logic. A new feature of the game is that a player can declare that their move is made on the basis of certain previous moves only and thereby in a sense  independent of other moves. We show that our game characterizes elementary equivalence in dependence logic.
\end{abstract}

\section{Introduction}

The Ehrenfeucht--Fraïssé game \cite{MR126370} is a fundamental tool of first order logic. We define the corresponding game for dependence logic \cite{MR2351449}---an extension of first order logic by \emph{dependence atoms} $\dep(x,y)$, with the intuitive meaning ``$y$ is completely determined by $x$"---and show that the game characterizes elementary equivalence in this logic.

Previously, versions of the Ehrenfeucht--Fraïssé game have been defined for a variety of extensions and fragments of first order logic, in particular infinitary logics (\cite{MR209132}, \cite{MR252208}, \cite{MR292656}), second order logic (\cite{MR191791}), logics with generalized quantifiers (\cite{MR314591}, \cite{MR427043}, \cite{MR462912}, \cite{MR597058}, \cite{MR1336413}) and logics with a fixed finite number of variables (\cite{MR685362}, \cite{MR666822}, \cite{MR1167033}). Moreover, one version of the  Ehrenfeucht--Fraïssé game for the very dependence logic that is our subject in this paper  was already defined in \cite{MR2351449}. However, that game was ``second order" in the sense that moves were sets of assignments, or teams, as they are called. Similarly, there is a ``second-order" game for inclusion logic \cite{MR3111746}, as well as one for inquisitive logic \cite{GRILLETTI_CIARDELLI_2023}. Our new game is ``first order" in the sense that moves are elements as in the original Ehrenfeucht--Fraïssé game of \cite{MR126370}. We show that our game captures dependence logic in the same way as the original Ehrenfeucht--Fraïssé game captures first order logic.

The ordinary Ehrenfeucht--Fraïssé game is a tool for comparing two models. The game is a perfect-information zero-sum game with two players, $\playerone$ and $\playertwo$. Winning strategies of player $\playertwo$ are witnesses to levels of similarity (or isomorphism) of the models, manifested by elementary equivalence up to a quantifier rank. Winning strategies of player $\playerone$ are witnesses to a difference between the models, manifested by a first-order sentence which is true in one model but not in the other. It is possible to think of the game as a syntax-free approach to comparing the models. We can use the game to express inability to separate the models with a first-order sentence of a certain size. Equivalently, we can use the game to say that one of the models has a first-order property that the other model does not have. In this analysis, the concept of ``first order'' can be varied by modifying the game.

In using the Ehrenfeucht--Fraïssé game approach for the logical analysis of dependence, something we aim to do in this paper, we have two models and we want to analyze whether they have the same dependence-type properties. For example, there may be a  definable binary relation which in one model contains a one-one function but in the other model perhaps not. Or there may be two definable unary relations $P$ and $Q$ such that in one model $|P|\le |Q|$ and in the other model perhaps $|P|>|Q|$. Or there may be a definable linear order which is a well-order in one model but perhaps not in the other. Or there may be a definable graph relation which is 3-colourable in one model but perhaps not in the other. How does the Ehrenfeucht--Fraïssé game capture such differences?

A possible solution is to use a second order Ehrenfeucht--Fraïssé game in which the players play relations rather than elements, or teams as in \cite{MR2351449}. An interesting case is the game for the Henkin (i.e. partially ordered) quantifier, where moves involve choosing a function which is then thrown away \cite{MR427043}. So positions in that game are first order even if moves are second order. This is relevant from the point of view of dependence logic because sentences of the latter have the same expressive power as sentences which start with a partially ordered quantifier prefix followed by a first order formula. However, in this paper we want to stay on the first-order level and focus on an Ehrenfeucht--Fraïssé game in which the moves are elements of the models, not teams or functions. Admittedly, it is not absolutely clear that there is a real difference, as we impose second order conditions on winning strategies.

One important feature of dependence logic that comes to play a role is the lack of negation, except in front of first-order atomic formulas. Therefore, we aim at defining a variant of the Ehrenfeucht--Fraïssé game for $\ma$ and $\mb$ such that if player $\playertwo$ has a winning strategy, then $\ma\models\phi$ implies $\mb\models\phi$ for all dependence logic sentences $\phi$. By stating the same also in converse order, i.e. existence of a winning strategy for $\playertwo$ in the game for $\mb$ and $\ma$, we obtain a criterion for elementary equivalence in the usual sense.

Let us consider, as an example,  two graphs $\ma$ and $\mb$. Let us assume that $\ma$ is 2-colourable but $\mb$ is  not. We assume the possibility to refer to two colours in both graphs and the question is whether the vertices can be coloured with these colours in a way that is required by the concept of 2-colouring.\footnote{One way to do this is to add a constant symbol for each colour. Another option is to simply play two additional rounds of the game in the beginning and treat the two first moves of each player as the two colours in each graph. Then a colouring of either of the graphs will correspond to a function on the graph that maps each element to one of these two distinguished elements.} Thus $\ma$ has the dependence-type property that we can assign one of two colours to each vertex in such a way that neighboring vertices get different colour. How would player $\playerone$ take advantage of this colouring $f$ in $\ma$ and the lack of any 2-colouring in $\mb$? An obvious idea is the following:

\begin{enumerate}[label = (\arabic*)]
    \item $\playerone$ picks $x_0$ in $\mb$ asking $\playertwo$ to give its colour.
    \item $\playertwo$ retaliates by picking $y_0$ in $\ma$ and asking $\playerone$ to give first its colour.
    \item $\playerone$ gives a colour $x_1$ for $y_0$, making the commitment that he chose the colour {\bf on the basis of knowing $y_0$ only}.

    \item Now $\playertwo$ gives some colour $y_1$ for $x_0$, making the commitment that she chose the colour {\bf on the basis of knowing $x_0$ only}.

    \item $\playerone$ picks again some $x_2$ in $\mb$ asking again $\playertwo$ to give its colour.
    \item $\playertwo$ again retaliates by picking $y_2$ in $\ma$ and asking $\playerone$ to give its colour.
    \item $\playerone$ gives a colour $x_3$ for $y_2$, claiming that he chose the colour {\bf on the basis of knowing $y_2$ only}.
    \item $\playertwo$ gives some colour $y_3$ for $x_2$, claiming that she chose the colour {\bf on the basis of knowing $x_2$ only}.

    \item\label{item: winning condition}
    Now $\playertwo$ has won if
    \begin{enumerate}
        \item $x_0=x_2\implies y_0=y_2$,
        \item $x_0Ex_2\implies y_0Ey_2$, and
        \item $x_1\ne x_3 \implies y_1\ne y_3$.
    \end{enumerate}
\end{enumerate}

A pair $\{s_0,s_1\}$ of plays of the above game is said to be \emph{$\playerone$-good} if it satisfies $\dep(y_0,x_1)\wedge\dep(y_2,x_3)$ and \emph{$\playertwo$-good} if it satisfies $\dep(x_0,y_1)\wedge\dep(x_2,y_3)$. A strategy $\tau$ of $\playertwo$ is called a \emph{uniform winning strategy} if every $\playerone$-good pair of plays in which $\playertwo$ has used the strategy  $\tau$ is $\playertwo$-good  and satisfies condition \ref{item: winning condition} above. 

We show that $\playertwo$ cannot have a uniform winning strategy in this game, assuming the players honour their commitments. Suppose she has and let us call it $\tau$. Let us  play so that $\playerone$ plays $x_0=x_2$ but otherwise $x_0$ is arbitrarily. Then she  plays some $y_1$. Let us denote this $y_1=g(x_0)$. The function $g$ cannot be a 2-colouring of $\mb$ because $\mb$ is not 2-colourable. Thus there are $b_0$ and $b_1$ in $\mb$ such that $b_0E^\mb b_1$ but $g(b_0)=g(b_1)$. Let us consider two plays $s_0$ and $s_1$ (see Table~\ref{tab:placeholder}):

\begin{table}[h]
\begin{center}
    \begin{tabular}{|c|c|cc|}
     \hline
        \multicolumn{2}{|c}{} & $s_0$ & $s_1$ \\
        \hline
        $\playerone$&$ x_0$  & $b_0$ & $b_1$ \\
        $\playertwo$&$y_0=\tau(x_0)$ &  $a_0$ & $a_0'$ \\
        $\playerone$&$x_1$ &  $f(a_0)$ & $f(a_0')$ \\
        $\playertwo$&$y_1$  & $g(b_0)$ &$g(b_1)$\\
        $\playerone$&$x_2$ & $b_0$& $b_0$ \\
        $\playertwo$&$y_2$ & $a_1$  & $a_1'$ \\
        $\playerone$&$x_3$ & $f(a_1)$  & $f(a_1')$ \\
        $\playertwo$&$y_3$ &   $d_1$& $d_1'$\\
         \hline
    \end{tabular}
\end{center} \caption{Two plays.}
    \label{tab:placeholder}\end{table}

The pair $s_0,s_1$ is $\playerone$-good since $\playerone$ is using the 2-colouring $f$ in $A$. Since $\playertwo$ is using  $\tau$, the pair is also $\playertwo$-good. Hence $d_1=d'_1$. By condition \ref{item: winning condition} applied to $s_0$, we have $g(b_0)=d_1$. Condition~\ref{item: winning condition} applied to $s_1$ yields $a_0'E^\mb a_1'$, whence $f(a_0')\ne f(a_1')$, and by condition \ref{item: winning condition} again, still applied to $s_1$, $g(b_1)\ne d_1'$. But
\[
    g(b_1)=g(b_0)=d_1=d_1'.
\]
This contradiction shows that $\playertwo$ cannot have a uniform winning strategy.

 The above game is a prototype of the new Ehrenfeucht--Fraïssé game $\EFdep_n(\A,\B)$ we propose in Section~\ref{The new game}, after some preliminaries in Section~\ref{pre}. We also introduce the crucially important concept of a uniform winning strategy of $\playertwo$ in $\EFdep_n(\A,\B)$.
In Section~\ref{An Auxiliary Game},  an auxiliary game, a slight modification $\EF^\deplog_n((\A,X),(\B,Y))$ of the Ehrenfeucht--Fraïssé game presented in \cite{MR2351449}, is introduced. It is then shown, in Section~\ref{Auxiliary game vs. uniform game}, that if $\playertwo$ has a winning strategy in the auxiliary game $\EF^\deplog_n((\A,X),(\B,Y))$, she has a uniform winning strategy in  $\EFdep_n(\A,\B)$.
 Section~\ref{Uniform game vs. logic} is devoted to the main results of this paper: an equivalence of having a uniform winning strategy in $\EFdep_n(\A,\B)$ and preservation of truth from $\A$ to $\B$.

\section{Preliminaries}\label{pre}

Let $L$ be a vocabulary. Given an $L$-structure $\A$ and a set $D$ of variables, an \emph{assignment} of $\A$ with domain $D$ is a function $D\to A$. Given an element $a\in A$ and an assignment $s\in A^D$ and a variable $x$ (not necessarily in $D$), we we denote by $s[a/x]$ the assignment $s'$ of $\A$ with domain $D\cup\{x\}$, defined by
\[
    s'(y) =
    \begin{cases}
        a & \text{if $y = x$,} \\
        s(y) & \text{for $y\in D\setminus\{x\}$.}
    \end{cases}
\]
Given an assignment $s$ and a tuple $\vec x = (x_0,\dots,x_{n-1})\in\dom(s)^n$ of variables, we write $s(\vec x)$ as a shorthand for the tuple $(s(x_0),\dots,s(x_{n-1}))$.

A structure $\A$ and an assignment $s$ of $\A$ satisfying a first-order formula $\phi$, in symbols $\A\models_s\phi$, is defined as usual: $\A\models_s R(\vec x)$ if $s(\vec x)\in R^\A$; $\A\models_s x = y$ if $s(x) = s(y)$; $\A\models_s\neg\phi$ if $\A\not\models_s\phi$; $\A\models_s \phi\land\psi$ if $\A\models_s\phi$ and $\A\models_s\psi$; and $\A\models_s\exists x\phi$ if there is $a\in A$ such that $\A\models_{s[a/x]}\phi$. It is obviously required that the free variables of the formula are included in the domain of the assignment.

Next we define what \emph{teams} are, as well as some operations on them that are used to define the semantics of dependence logic.

\begin{definition}
    Let $L$ be a vocabulary.
    \begin{enumerate}
        \item Given an $L$-structure $\A$ and a set $D$ of variables, a \emph{team of $\A$ with domain $D$} is a set of assignments of $\A$ with domain $D$, i.e. a subset of $A^D$. The domain of a team $X$ is denoted by $\dom(X)$.
        \item Given a structure $\A$ and a team $X$ of $\A$, a \emph{supplement function} for $X$ and $\A$ is a function $X\to\Pow(A)\setminus\{\emptyset\}$. Given a supplement function $F$ for $X$ and $\A$ and a variable $x$, we denote by $X[F/x]$ the team $Y$ of $\A$ with domain $\dom(X)\cup\{x\}$ defined by
        \[
            Y = \{ s[a/x] \mid s\in X, a\in F(s) \}.
        \]
        We say that the team has been \emph{supplemented by $F$}.
        \item We denote by $X[A/x]$ the team $Y$ of $\A$ with domain $\dom(X)\cup\{x\}$ defined by
        \[
            Y = \{ s[a/x] \mid a\in A \}.
        \]
        We say that the team has been \emph{duplicated}.
    \end{enumerate}
\end{definition}

Next we define the syntax and semantics of \emph{dependence logic} as it is usually presented in the literature. Another (equivalent) variant is introduced in Section~\ref{An Auxiliary Game} and then used for the rest of the paper.

\begin{definition}
    Let $L$ be a vocabulary.
    \begin{enumerate}
        \item The syntax of $L$-formulas of dependence logic is
        \[
            \phi \Coloneqq \alpha \mid \neg\alpha \mid \dep(\vec x,y) \mid (\phi\land\phi) \mid (\phi\lor\phi) \mid \exists x\phi \mid \forall x\phi,
        \]
        where $\alpha$ is a first-order atomic $L$-formula. We denote dependence logic by $\deplog$.
        \item Given an $L$-formula $\phi\in\deplog$, an $L$-structure $\A$ and a team $X$ of $\A$ whose domain contains the free variables of $\phi$, we define $\A\models_X\phi$  recursively as follows.
        \begin{enumerate}
            \item If $\phi$ is a first-order atomic or negated atomic formula, then $\A\models_X\phi$ if $\A\models_s\phi$ for all $s\in X$.\footnote{Here $\A\models_s\phi(x_0,\dots,x_{n-1})$ means $\A\models\phi(s(x_0),\dots,s(x_{n-1}))$.}
            \item If $\phi$ is $\dep(x_0,\dots,x_{n-1},y)$, then $\A\models_X\phi$ if for all $s,s'\in X$ such that $s(x_i) = s'(x_i)$ for all $i<n$, we have $s(y) = s'(y)$.
            \item If $\phi = \psi\land\theta$, then $\A\models_X\phi$ if $\A\models_X\psi$ and $\A\models_X\theta$.
            \item If $\phi = \psi\lor\theta$, then $\A\models_X\phi$ if there are $Y,Z\subseteq X$ such that $X = Y\cup Z$, $\A\models_Y\psi$ and $\A\models_Z\theta$.
            \item If $\phi = \exists x\psi$, then $\A\models_X\phi$ if there is a supplement function $F$ for $X$ and $\A$ such that $\A\models_{X[F/x]}\psi$.
            \item If $\phi = \forall x\psi$, then $\A\models_X\phi$ if $\A\models_{X[A/x]}\psi$.
        \end{enumerate}
        This is the standard team semantics of dependence logic.
    \end{enumerate}
\end{definition}

The quantifier rank $\qr(\phi)$ of a formula $\phi$ is defined recursively by setting $\qr(\phi) = 0$ for atomic formulas $\phi$, $\qr(\phi\land\psi) = \qr(\phi\lor\psi) = \max\{\qr(\phi), \qr(\psi)\}$ and $\qr(\exists x\phi) = \qr(\forall x\phi) = \qr(\phi)+1$. Note that in~\cite{MR2351449}, the definition is slightly different, as also disjunction increases quantifier rank.

We write $(\A,X)\posEquiv^\deplog_n(\B,Y)$ if $\dom(X) = \dom(Y)$ and for all formulas $\phi$ of dependence logic, with free variables in the domain of $X$ and $Y$, and quantifier rank $\leq n$, we have
\[
    \A\models_X\phi \implies \B\models_Y\phi.
\]
We write $\A\posEquiv^\deplog_n\B$ if $\B$ satisfies all the sentences of quantifier rank $\leq n$ that $\A$ satisfies. If $\A\posEquiv^\deplog_n\B$ for all $n<\omega$, we write $\A\posEquiv^\deplog\B$. Note that $\A\posEquiv^\deplog_n\B$ if and only if $(\A,X)\posEquiv^\deplog_n(\B,Y)$, where $X = Y = \{\emptyset\}$.

\begin{fact}
    Dependence logic is downward closed, meaning that for any $L$-formula $\phi$ of dependence logic, an $L$-structure $\A$ and a team $X$ of $\A$ whose domain contains the free variables of $\phi$, if $\A\models_X\phi$, then $\A\models_Y\phi$ for any $Y\subseteq X$.
\end{fact}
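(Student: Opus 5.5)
The Fact is proved by induction on the structure of $\phi$, simultaneously for all teams: assume $\A\models_X\phi$ and $Y\subseteq X$, and show $\A\models_Y\phi$.

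\emph{Base cases.} For first-order atomic and negated atomic formulas the satisfaction condition is universal over $s\in X$, so it holds for every $s\in Y$. For a dependence atom $\dep(x_0,\dots,x_{n-1},y)$ the condition quantifies universally over pairs $s,s'\in X$. Pairs from $Y$ are pairs from $X$, so the condition persists.

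\emph{Connectives.} Conjunction is immediate from the induction hypothesis. For disjunction, take the witnesses $X=X_0\cup X_1$ with $\A\models_{X_0}\psi$ and $\A\models_{X_1}\theta$, and set $Y_0=X_0\cap Y$ and $Y_1=X_1\cap Y$. Then $Y=Y_0\cup Y_1$, and the induction hypothesis applied to $Y_0\subseteq X_0$ and $Y_1\subseteq X_1$ gives $\A\models_{Y_0}\psi$ and $\A\models_{Y_1}\theta$.

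\emph{Quantifiers.} For $\exists x\psi$, take a supplement function $F$ with $\A\models_{X[F/x]}\psi$ and restrict it to $F\restriction Y$. This restriction is still a supplement function, since its values remain nonempty, and
\[
Y[F\restriction Y/x]\subseteq X[F/x],
\]
so the induction hypothesis applies. For $\forall x\psi$, note that $Y[A/x]\subseteq X[A/x]$; here we read the paper's notation $X[A/x]$ as $\{s[a/x]\mid s\in X, a\in A\}$. The induction hypothesis again gives the claim.

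There is no real obstacle. The only point needing care is to run the induction over all teams at once, so that the hypothesis can be applied to the supplemented or duplicated teams and to the split parts in the disjunction case. The empty team case is covered automatically, since each clause is trivially satisfied there (for disjunction take $Y_0=Y_1=\emptyset$).
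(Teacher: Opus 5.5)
Your proof is correct. The paper gives no proof of this Fact and treats downward closure as a known result from \cite{MR2351449}, where it is established by exactly this structural induction over all teams simultaneously. Your reading of $X[A/x]$ as $\{s[a/x]\mid s\in X, a\in A\}$ is the right repair of the typo in the paper's definition of duplication.
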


The existential and universal quantifiers do not, at first glance, seem like the semantic duals of each other. However, due to downward closedness, they are, as demonstrated by the following lemma.

\begin{lemma}\label{lemma: alt semantics for universal quantifier}
    Given a formula $\phi$ of dependence logic, $\A\models_X\forall x\phi$ if and only if $\A\models_{X[F/x]}\phi$ for all supplement functions $F$ of $X$ and $\A$.
\end{lemma}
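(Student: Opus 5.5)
The argument compares the team produced by an arbitrary supplement function with the duplicated team and then invokes downward closedness (the Fact above). The key observation is that for every supplement function $F$ of $X$ and $\A$ we have
\[
    X[F/x] \subseteq X[A/x].
\]
Indeed, every element of $X[F/x]$ has the form $s[a/x]$ with $s\in X$ and $a\in F(s)\subseteq A$, and every such assignment belongs to $X[A/x]$. (Here we read $X[A/x]$ as $\{s[a/x]\mid s\in X, a\in A\}$, which is clearly the intended meaning of the definition.)

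For the forward direction, suppose $\A\models_X\forall x\phi$. By the semantics of the universal quantifier this means $\A\models_{X[A/x]}\phi$. Given any supplement function $F$, the inclusion above and downward closedness yield $\A\models_{X[F/x]}\phi$.

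For the converse, suppose $\A\models_{X[F/x]}\phi$ for every supplement function $F$. We choose the constant supplement function $F(s)=A$ for all $s\in X$. This is a legitimate supplement function provided $A\neq\emptyset$, since its values must lie in $\Pow(A)\setminus\{\emptyset\}$; structures are nonempty by convention. With this choice $X[F/x]=X[A/x]$ exactly, so $\A\models_{X[A/x]}\phi$, which is $\A\models_X\forall x\phi$.

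The only delicate points are checking that $F\equiv A$ really is admissible (nonemptiness of $A$) and handling the edge case $X=\emptyset$. In that case both sides hold trivially, because $X[F/x]=X[A/x]=\emptyset$ and the empty team satisfies every formula. Otherwise the proof is a direct combination of the inclusion $X[F/x]\subseteq X[A/x]$ with the downward closedness Fact.
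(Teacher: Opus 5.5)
Your proof is correct and matches the paper's argument: right to left uses that duplication is supplementation by the constant function $s\mapsto A$, and left to right uses $X[F/x]\subseteq X[A/x]$ together with downward closedness. Your separate treatment of $X=\emptyset$ is harmless but unnecessary, since the general argument already covers that case.
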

\begin{proof}
    As duplication is a particular kind of supplementation, the direction from right to left is clear. For the other direction, suppose that $\A\models_X\forall x\phi$. Then $\A\models_{X[A/x]}\phi$. Let $F$ be any supplement function of $X$. Note that $X[F/x]\subseteq X[A/x]$. As dependence logic is downward closed, this means that $\A\models_{X[F/x]}\phi$.
\end{proof}

We conclude the section with an important normal form for $\deplog$.

\begin{theorem}[\cite{MR2351449}]\label{theorem: normal form}
    Every sentence of $\deplog$ is equivalent to one of the form
    \[
        \forall x_0\dots\forall x_{m-1}\exists y_0\dots\exists y_{n-1}(\psi\land\theta),
    \]
    where
    \begin{itemize}
        \item $\theta$ is a  quantifier-free first-order formula,
        \item $\psi = \bigwedge_{i<n}\dep(\vec z_i, y_i)$, and
        \item $\vec z_i \in\{x_j \mid j<m\}^{<\omega}$ for all $i<n$.
    \end{itemize}
\end{theorem}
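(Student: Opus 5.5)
This normal form is the classical one from \cite{MR2351449}. I would prove it in two stages: first a prenex-type transformation, then a Skolemization-style step that moves all existential quantifiers behind the universal ones and records the dependencies explicitly.

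\textbf{Stage 1: prenex form.} I would show by induction on $\phi$ that every formula is equivalent, over teams whose domain contains the free variables, to a formula of the form $Q_1 w_1\dots Q_k w_k\,\chi$. Here $\chi$ is quantifier-free, built from first-order literals and dependence atoms by $\land$ and $\lor$.

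1. Rename bound variables apart.
2. Verify the team-semantic equivalences $(\exists x\phi)\land\psi\equiv\exists x(\phi\land\psi)$ and $(\forall x\phi)\land\psi\equiv\forall x(\phi\land\psi)$ when $x$ is not free in $\psi$. These rely on locality of team semantics (satisfaction depends only on the restriction to free variables).
3. Verify the analogous equivalences for $\lor$. The delicate case is $(\forall x\phi)\lor\psi\equiv\forall x(\phi\lor\psi)$. Left to right: from a split $X=Y\cup Z$, the duplicated team $X[A/x]$ splits as $Y[A/x]\cup Z[A/x]$. Right to left: given a split of $X[A/x]$, one needs downward closure together with a dependence-atom trick to recover a split of $X$.

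\textbf{Stage 2: eliminating dependence atoms and disjunctions from the matrix.}

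1. Replace each dependence atom $\dep(\vec u,v)$ in $\chi$ by a guess. Introduce a fresh existential $\exists y$ with $\dep(\vec u,y)$, placed inside the prefix, and require $y=v$ in the first-order part.
2. Replace each disjunction by an existentially quantified bit. Using two distinguished values (or the formula $y=z$ for fresh $y,z$), write $\phi_1\lor\phi_2$ as $\exists y\exists z(\dep(\vec w,y)\land\dep(\vec w,z)\land((y=z\land\phi_1)\lor(y\neq z\land\phi_2)))$. This works provided $\phi_1,\phi_2$ are flattened to first-order formulas evaluated pointwise. The constancy trick is what makes a team split correspond to a choice function.
3. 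Once only first-order literals remain under $\land$ and $\lor$, the matrix is flat: satisfaction is pointwise. So it is a first-order quantifier-free $\theta$.

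\textbf{Stage 3: moving existentials past universals.} An existential $\exists y$ occurring in the scope of universals $\vec x$ (and possibly earlier existentials) is replaced by $\exists y$ moved to the end of the prefix, conjoined with $\dep(\vec x',y)$. Here $\vec x'$ is the list of universals preceding it in the original prefix. Dependencies on earlier existentials are eliminated by substituting their own universal arguments: if $y$ depends on $y'$, and $y'$ depends only on $\vec x''\subseteq\vec x'$, then $y$ depending on $\vec x'$ suffices.

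The correctness argument uses the supplement-function characterization: $\forall$ corresponds to duplication, and $\exists$ corresponds to choosing functions. Lemma~\ref{lemma: alt semantics for universal quantifier} is used to commute the quantifiers at the level of teams. Finally, dependence atoms of the form $\dep(\vec x,y)$ with $\vec x$ universal are collected into $\psi$. A variable with several atoms is handled by duplicating it with equality constraints, so that each $y_i$ has exactly one atom.

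\textbf{Main obstacle.} I expect the main obstacle to be Stage 1's disjunction step together with Stage 2's replacement of $\lor$. These are where team semantics diverges from first-order semantics, and where downward closure and the uniformity of the chosen bit must be checked carefully. I would first try to prove the general lemma that for a flat $\theta$ and functions witnessing it, team satisfaction reduces to existence of Skolem functions with the stated argument lists.
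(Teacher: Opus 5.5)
Stages 1 and 2 are essentially sound. Stage 2, step 1 is the right key idea: a dependence atom sitting under $\land/\lor$ can be replaced by a fresh innermost existential $y$, with $\dep(\vec u,y)$ moved to the top level and $y=v$ left in the matrix. The genuine gap is in Stage 3. It concerns the explicit atoms $\dep(\vec u,y)$ whose tuple $\vec u$ contains an existentially quantified variable.

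Your rule ``if $y$ depends on $y'$, and $y'$ depends only on $\vec x''$, then $y$ depending on $\vec x'$ suffices'' is sound for the \emph{implicit} dependencies created by the order of the prefix. For an \emph{explicit} atom it gives only one direction. The formula $\dep(\vec x'',y')\land\dep(y',\vec x''',y)$ entails $\dep(\vec x',y)$, but not conversely: the new atom lets $y$ depend on the universals in ways that do not factor through $y'$. So your output sentence is strictly weaker. Concretely, $\forall x\exists v(P(v)\land\dep(v,x))$ holds in $\A$ iff there is an injection $A\to P^\A$. Stage 2 turns it into the equivalent $\forall x\exists v\exists y(\dep(v,y)\land P(v)\land y=x)$. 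Stage 3 then outputs $\forall x\exists v\exists y(\dep(x,v)\land\dep(x,y)\land P(v)\land y=x)$, which holds iff $P^\A\neq\emptyset$. Taking $A=\{0,1\}$ and $P^\A=\{0\}$ separates the two.

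The missing idea is to read such existentials through fresh universal copies. For each existential $u$ occurring in the tuple of some explicit atom:
add a new universal $x_u$ to the prefix;
replace $u$ by $x_u$ inside the dependence atoms;
replace the first-order matrix $\theta$ by $\bigvee_u x_u\neq u\lor\theta$.

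This works for two reasons. Every existential carries an explicit atom that does not mention the copies, so none of them can exploit the new universals. And for each tuple of values of the original universals, exactly one assignment has $x_u=u$ for all $u$; on it you recover the old condition, while all other assignments satisfy the guard trivially. In the example this gives
$\forall x\forall x'\exists v\exists y(\dep(x,v)\land\dep(x',y)\land(x'\neq v\lor(P(v)\land y=x)))$,
which says exactly that $v=f(x)\in P^\A$ with $g(f(x))=x$. This is the same flattening of a nested Skolem term $g(f(x))$ by a fresh universal variable that occurs when the normal form is obtained through $\Sigma^1_1$. The paper itself only cites the theorem.

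Two smaller points. First, Stage 2, step 2 should simply be dropped: after step 1 the matrix is first-order, hence flat, and $\theta$ may contain $\lor$. As written it is not even an equivalence in a one-element structure, where $y\neq z$ is unsatisfiable. Second, the right-to-left direction of $(\forall x\phi)\lor\psi\equiv\forall x(\phi\lor\psi)$ needs no dependence-atom trick. Put $s$ in the left part iff every $s[a/x]$ lies in the $\phi$-part, then use downward closure and locality.
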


\section{The New Ehrenfeucht--Fraïssé Game}\label{The new game}

We now give an exact definition of the new EF game that we described in the introduction and which we denote by $\EFdep_n(\A,\B)$.

\begin{definition}
    Let $\A$ and $\B$ be $L$-structures with disjoint domains.
    \begin{enumerate}
        \item We denote the (ordinary) Ehrenfeucht--Fraïssé game of length $n$ between $\A$ and $\B$ by $\EF_n(\A,\B)$. The game has two players, $\playerone$ and $\playertwo$, and they take turns picking elements of $A\cup B$. On round $i<n$, either
        \begin{enumerate}
            \item $\playerone$ chooses some $x_i\in A$ and $\playertwo$ responds with $y_i\in B$, or
            \item $\playerone$ choose some $x_i\in B$ and $\playertwo$ responds with $y_i\in A$.
        \end{enumerate}
        Then, let
        \[
            a_i =
            \begin{cases}
                x_i & \text{if $x_i\in A$,} \\
                y_i & \text{otherwise,}
            \end{cases}
            \quad\text{and}\quad
            b_i =
            \begin{cases}
                x_i & \text{if $x_i\in B$,} \\
                y_i & \text{otherwise.}
            \end{cases}
        \]
        $\playertwo$ wins a play
        \[
            (x_0,y_0,\dots,x_{n-1},y_{n-1})
        \]
        if the function $\{(a_i,b_i) \mid i<n\}$ is a partial isomorphism.
        
        If a player $P$ has a winning strategy in the game, we write
        \[
            P\wins\EF_n(\A,\B).
        \]
        \item The game $\EFdep_n(\A,\B)$ of length $n$ between $\A$ and $\B$ is played exactly the same way as $\EF_n(\A,\B)$, with the following addition: on each round $k<n$, in addition to playing an element $x_k\in A\cup B$, $\playerone$ also plays a collection $\W_k\subseteq\Pow(n)$ of sets of indices, as well as two sets
        \[
            \U^+_k,\U^-_k \subseteq \{ R(\vec x) \mid R\in L\cup\{=\}, \vec x\in\{v_0,\dots,v_{n-1}\}^{\ar(R)}, v_k\in\vec x \},
        \]
        of atomic first-order formulas such that $\U^+_k\cap\U^-_k = \emptyset$.
        Player $\playertwo$ wins a play
        \[
            ((x_0,\W_0,\U^+_0,\U^-_k),y_0,\dots,(x_{n-1},\W_{n-1},\U^+_{n-1},\U^-_k),y_{n-1})
        \]
        if the relation $\{ (a_k, b_k) \mid k<n \}$ preserves the atomic formulas of $\bigcup_{k<n}\U^+_k$ and the negations of the atomic formulas of $\bigcup_{k<n}\U^-_k$, i.e. for any $k<n$ and $\phi(v_0,\dots,v_{n-1})\in\U^+_k$,
        \[
            \A\models\phi(a_0,\dots,a_{n-1}) \implies \B\models\phi(b_0,\dots,b_{n-1}),
        \]
        and for any $\phi(v_0,\dots,v_{n-1})\in\U^-_k$,
        \[
            \A\models\neg\phi(a_0,\dots,a_{n-1}) \implies \B\models\neg\phi(b_0,\dots,b_{n-1}).
        \]
    \end{enumerate}
\end{definition}

In the above definition, the set $\W_k$ is the set of dependence commitments made by $\playerone$ on round $k$. If $\{i_0,\dots,i_{k-1}\}\in\W_k$, then $\playerone$ has declared that the move $x_k$ is determined by the moves that were made on rounds $i_0,\dots,i_{k-1}$ in the same model as $x_k$, i.e. by $a_{i_0},\dots,a_{i_{k-1}}$ in the case $x_k\in A$ and by $b_{i_0},\dots,b_{i_{k-1}}$ in the case $x_k\in B$. However, a careful reader may notice that the winning condition for $\playertwo$ does not mention these commitments at all. This is by design: in a single play, a declaration of dependence has no meaning\footnote{Or, rather, the meaning is trivialized, for a singleton team always satisfies all dependence atoms.}. Only when examining multiple plays at once can we determine whether a dependence occurs. This is done below in Definition~\ref{Uniform winning strategy}.

Note that also first-order atomic formulas are treated as commitments of $\playerone$, as illustrated by the sets $\U^+_k$ (atoms to be preserved) and $\U^-_k$ (atoms whose negations are to be preserved). This is due to the fact that in the team semantics of first-order logic, a team satisfies an atomic formula if and only if every assignment separately does. Now, if we think of (the element moves of) a single play of the game as a pair $(s,s')$ of assignments---the moves played in $\A$ making up $s$ and the moves played in $\B$ making up $s'$---then a collection of plays gives rise to a pair of teams. It might be that neither team satisfies, say, $P(v_0)$, but for different reasons: perhaps one assignment $s_0$ in $\A$ does not satisfy $P(v_0)$ while its pair $s'_0$ in $\B$ does, and another assignment $s_1$ satisfies $P(v_0)$ while its pair $s'_1$ does not. Therefore, any atomic formulas $\playerone$ has no intention of satisfying in a uniform manner, should not contribute to $\playertwo$ losing a play in the sense of dependence logic.

Note also that as equations and their negations only need to be preserved when declared so by $\playerone$, the relation $\{ (a_k, b_k) \mid k<n \}$ generally need not be a function, let alone an injection, unlike in the classical game $\EF_n(\A,\B)$.

Next we give an exact definition of what it means for a winning strategy of $\playertwo$ to be uniform.

\begin{definition}\label{Uniform winning strategy}\quad
    \begin{enumerate}
        \item Let
        \[
            p = ((x_0,\W_0,\U^+_0,\U^-_0),y_0,\dots,(x_{n-1},\W_{n-1},\U^+_{n-1},\U^-_{n-1}),y_{n-1})
        \]
        and
        \[
            p' = ((x'_0,\W'_0,(\U^+_0)',(\U^-_0)'),y'_0,\dots,(x'_{n-1},\W'_{n-1},(\U^+_{n-1})',(\U^-_{n-1})'),y'_{n-1})
        \]
        be two plays of $\EFdep_n(\A,\B)$. We say that $\playerone$ has played $p$ and $p'$ \emph{consistently} if for all $k<n$,
        \begin{enumerate}
            \item $\W_k = \W'_k$, $\U^+_k = (\U^+_k)'$ and $\U^-_k = (\U^-_k)'$,
            \item $x_k\in A$ if and only if $x'_k\in A$,
            \item for all atoms $\phi(v_0,\dots,v_{n-1})\in\bigcup_{k<n}\U^+_k$,
            \[
                \A\models\phi(a_0,\dots,a_{n-1}) \quad\text{and}\quad \A\models\phi(a'_0,\dots,a'_{n-1}),
            \]
            \item for all atoms $\phi(v_0,\dots,v_{n-1})\in\bigcup_{k<n}\U^-_k$,
            \[
                \A\models\neg\phi(a_0,\dots,a_{n-1}) \quad\text{and}\quad \A\models\neg\phi(a'_0,\dots,a'_{n-1}),
            \]
            and
            \item for all $W\in\W_k$, whenever $a_i = a'_i$ for all $i\in W$, also $a_k = a'_k$.
        \end{enumerate}
        \item We say that a strategy $\tau$ of $\playertwo$ in $\EFdep_n(\A,\B)$ is \emph{uniformly winning} if it is winning and the following holds: for any two plays
        \[
            ((x_0,\W_0,\U^+_0,\U^-_0),y_0,\dots,(x_{n-1},\W_{n-1},\U^+_{n-1},\U^-_{n-1}),y_{n-1})
        \]
        and
        \[
            ((x'_0,\W'_0,(\U^+_0)',(\U^-_0)'),y'_0,\dots,(x'_{n-1},\W'_{n-1},(\U^+_{n-1})',(\U^-_{n-1})'),y'_{n-1}),
        \]
        where $\playertwo$ has used $\tau$ and $\playerone$ has played consistently, for any $k<n$ and $W\in\W_k$, we have
        \[
            \text{$b_i = b'_i$ for all $i\in W$} \implies b_k = b'_k.
        \]
        When $\playertwo$ has a uniformly winning strategy, we write
        \[
            \playertwo\winsu\EFdep_n(\A,\B).
        \]
    \end{enumerate}
\end{definition}

In the above definition, it is enough to consider only pairs of plays instead of larger sets. This reflects the fact that if a dependence atom is not satisfied by a team, then a subteam of two assignments will already falsify the atom. In Section~\ref{Auxiliary game vs. uniform game}, we show that one can also consider larger sets when convenient.


\section{An Auxiliary Game}\label{An Auxiliary Game}

Let us, for a moment, forget the commitments of $\playerone$ in $\EFdep_n(\A,\B)$ and only consider the element moves. Then, as previously mentioned, a single play
\[
    (x_0,y_0,\dots,x_{n-1},y_{n-1})
\]
can be thought of as a pair of assignments
\[
    (s,s')\in A^D\times B^D,
\]
where $D = \{v_0,\dots,v_{n-1}\}$, such that $a_i = s(v_i)$ and $b_i = s'(v_i)$ for all $i<n$. Furthermore, a set of plays can be seen as a pair of teams $(X,Y)$, where $X$ consists of the assingments $s$ and $Y$ of the assignments $s'$. In such a pair $(X,Y)$, every assignment $s\in X$ clearly corresponds to exactly one $s'\in Y$ and vice versa, so it makes sense to tag each assignment in each team by some index to keep track of the pairings. We now define a notion of a team indexed by a given set to make this formal---and to help us in bookkeeping in later proofs. Giving a team an indexing makes it a multiteam, but team semantics of $\deplog$ for indexed teams will have nothing to do with multiteam semantics of~\cite{MR3833654} but instead collapses back to ordinary team semantics: the truth of ``$\A\models_X\phi$'' remain the same if one forgets about the indexing of $X$ altogether.

Finally, we define an auxiliary game $\EF^\deplog_n((\A,X),(\B,Y))$ based on indexed teams and show that $(\A,X)\posEquiv^\deplog_n(\B,Y)$ implies the existence of a winning strategy for $\playertwo$.

\begin{definition}
    Let $L$ be a vocabulary, $\A$ an $L$-structure, $D$ a set of variables and $I$ a set.
    \begin{enumerate}
        \item An \emph{$I$-indexed team} of $\A$ with domain $D$ is a function $I\to A^D$.
        \item Let $\eta$ be a function whose domain is $I$ and whose values are non-empty sets. A \emph{supplement function} of type $\eta$ for $\A$ is any function $F$ such that $\dom(F) = I$ and $F(i)\in A^{\eta(i)}$ for all $i\in I$.
        \item Let $F$ be a supplement function of type $\eta$ for $\A$. Denote by $I[\eta]$ the set
        \[
            \{ (i,j) \mid i\in I, j\in\eta(i) \}.
        \]
        For a variable $x$ and an $I$-indexed team $X$ of $\A$, we define the supplementation $X[F/x]$ of $X$ by $F$ to be the $I[\eta]$-indexed team $Y$ of $\A$ defined as
        \[
            Y(i,j)(y) =
            \begin{cases}
                F(i)(j) & \text{if $y = x$}, \\
                X(i)(y) & \text{otherwise}.
            \end{cases}
        \]
        In other words, $X[F/x](i,j) = X(i)[F(i)(j)/x]$ for all $(i,j)\in I[\eta]$.
        \item If $X$ is an $I$-indexed team, $J\subseteq I$ and $Y$ is a $J$-indexed team, then we say that $Y$ is a subteam of $X$ if $X\restriction J = Y$.
    \end{enumerate}
\end{definition}

\begin{definition}
    Given an $L$-structure $\A$, an $I$-indexed team $X$ of $\A$ and a formula $\phi$ of dependence logic whose free variables are included in $\dom(X)$, we define $\A\models_X\phi$ recursively as follows.
    \begin{enumerate}
        \item If $\phi$ is first-order atomic or negated atomic, then $\A\models_X\phi$ if for all $i\in I$, $\A\models_{X(i)}\phi$.
        \item If $\phi$ is $\dep(\vec x,y)$, then $\A\models_X\phi$ if for all $i,j\in I$,
        \[
            X(i)(\vec x) = X(j)(\vec x) \implies X(i)(y) = X(j)(y).
        \]
        \item If $\phi = \psi\land\theta$, then $\A\models_X\phi$ if $\A\models_X\psi$ and $\A\models_X\theta$.
        \item If $\phi = \psi\lor\theta$, then $\A\models_X\phi$ if there are $J,K\subseteq I$ such that $J\cup K = I$ and $\A\models_{X\restriction J}\psi$ and $\A\models_{X\restriction K}\theta$.
        \item If $\phi = \exists x \psi$, then $\A\models_X\phi$ if there is a supplement function $F$ for $\A$ such that $\A\models_{X[F/x]}\psi$.
        \item If $\phi = \forall x\psi$, then $\A\models_X\phi$ if $\A\models_{X[F/x]}\psi$ for all supplement functions $F$ for $\A$.
    \end{enumerate}
\end{definition}

Next we prove that the indexing of a team by some set $I$, possibly making the team a multiset of assignments, does not in any way affect the semantics as we have defined it.

\begin{lemma}\label{lemma: indexing does not change semantics}
    Given an $I$-indexed team $X$, denote by $X^*$ the (ordinary) team
    \[
        \{ X(i) \mid i\in I \}.
    \]
    Then for any formula $\phi$ of dependence logic,
    \[
        \A\models_X\phi \iff \A\models_{X^*}\phi.
    \]
\end{lemma}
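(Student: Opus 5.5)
We prove the equivalence by induction on the structure of $\phi$, simultaneously for all index sets $I$ and all $I$-indexed teams $X$. The atomic, negated atomic and dependence atom cases are immediate. Satisfaction by $X$ is defined by a condition on all $X(i)$, or on all pairs $X(i),X(j)$. Satisfaction by $X^*$ is the same condition on all members, or all pairs of members, of $\{X(i)\mid i\in I\}$, and these ranges are the same. Conjunction is trivial from the induction hypothesis.

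For disjunction, suppose first that $J\cup K=I$ witnesses $\A\models_X\psi\lor\theta$. Then $(X\restriction J)^*\cup(X\restriction K)^* = X^*$, and the induction hypothesis turns the two indexed witnesses into ordinary ones. Conversely, given $Y\cup Z=X^*$ with $\A\models_Y\psi$ and $\A\models_Z\theta$, we set $J=\{i\mid X(i)\in Y\}$ and $K=\{i\mid X(i)\in Z\}$. Then $J\cup K=I$, $(X\restriction J)^*=Y$ and $(X\restriction K)^*=Z$, so the induction hypothesis again applies.

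For the existential quantifier, suppose an indexed supplement function $F$ of type $\eta$ has $\A\models_{X[F/x]}\psi$. We define the ordinary supplement function $G(s)=\{F(i)(j)\mid X(i)=s,\ j\in\eta(i)\}$ for $s\in X^*$; it is nonempty because $\eta(i)\ne\emptyset$. Then $X[F/x]^*=X^*[G/x]$, and the induction hypothesis finishes this direction. Conversely, given $G$ for $X^*$, we take $\eta(i)=G(X(i))$ and let $F(i)$ be the identity map on $G(X(i))$, viewed as an element of $A^{\eta(i)}$. Then $X[F/x]^*=X^*[G/x]$ again.

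For the universal quantifier we use Lemma~\ref{lemma: alt semantics for universal quantifier}, which says that $\A\models_{X^*}\forall x\psi$ iff $\psi$ holds in $X^*[G/x]$ for every ordinary supplement function $G$. The two translations above map indexed supplements to ordinary ones and back, always preserving the team $X^*[G/x]=X[F/x]^*$. So the quantification over all $F$ matches the quantification over all $G$, and the induction hypothesis gives the equivalence.

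The only step requiring care is the quantifier case. One must check that the constructed $G$ or $F$ is a legitimate supplement function, that is, nonempty-valued and correctly typed. One must also check the identity $X[F/x]^*=X^*[G/x]$ in both directions, and that the converse construction covers every $G$. Once that bookkeeping is in place, the universal case follows from the existential translations together with Lemma~\ref{lemma: alt semantics for universal quantifier}.
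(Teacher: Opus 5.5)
Your proof is correct and follows the paper's argument. It proceeds by induction on $\phi$, uses the same two translations between indexed and ordinary supplement functions to establish $X[F/x]^*=X^*[G/x]$, and reduces the universal case to these translations via Lemma~\ref{lemma: alt semantics for universal quantifier}. Your choice $G(s)=\{F(i)(j)\mid X(i)=s,\ j\in\eta(i)\}$ is in fact more careful than the paper's $G(X(i))=\{F(i)(j)\mid j\in\eta(i)\}$, which is not well defined when $X$ gives the same assignment to two distinct indices.
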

\begin{proof}
    We proceed by induction on $\phi$. The case for atomic formulas and connectives is clear, so we only consider the quantifiers. The universal quantifier case is the dual of the existential quantifier case, by Lemma~\ref{lemma: alt semantics for universal quantifier}. Suppose that $\phi = \exists x\psi$ and $\A\models_X\phi$. Then there is a supplement function $F$ such that $\A\models_{X[F/x]}\psi$. Let $\eta$ be the type of $F$. We define an (ordinary) supplement function $G$ on $X^*$ by setting
    \[
        G(X(i)) = \{ F(i)(j) \mid j\in\eta(i) \}.
    \]
    By the induction hypothesis, we have $\A\models_{X[F/x]^*}\psi$, so it is enought to show that $X[F/x]^* = X^*[G/x]$.
    
    First suppose that $s\in X[F/x]^*$. Then $s = X[F/x](i,j)$ for some $(i,j)\in I[\eta]$. This means that $i\in I$ and $j\in\eta(i)$. Now $s = X(i)[F(i)(j)/x]$. As $j\in\eta(i)$, we have $F(i)(j)\in G(X(i))$, and by definition, $X(i)\in X^*$, so it follows that $s\in X^*[G/x]$.

    Then suppose that $s\in X^*[G/x]$. Then $s = s'[a/x]$ for some $s'\in X^*$ and $a\in G(s')$. Now there is some $i\in I$ such that $s' = X(i)$. As $a\in G(X(i))$, there is $j\in\eta(i)$ such that $a = F(i)(j)$. Thus $s = X(i)[F(i)(j)/x] = X[F/x](i,j)$, whence $s\in X[F/x]^*$.

    Conversely, suppose that $\A\models_{X^*}\phi$. Then there is a(n ordinary) supplement function $G$ on $X^*$ such that $\A\models_{X^*[G/x]}\psi$. Let $\eta$ be the function
    \[
        i\mapsto G(X(i)), i\in I.
    \]
    Define a supplement function $F$ of type $\eta$ by setting
    \[
        F(i)(a) = a.
    \]
    Again, it is sufficient to show that $X[F/x]^* = X^*[G/x]$.

    First suppose that $s\in X[F/x]^*$. Then $s = X[F/x](i,a)$ for some $(i,a)\in I[\eta]$. Now $s = X(i)[F(i)(a)/x]$. By definition, $F(i)(a) = a\in G(X(i))$ and $X(i)\in X^*$, so $s\in X^*[G/x]$.

    Then suppose that $s\in X^*[G/x]$. Then $s = s'[a/x]$ for some $s'\in X^*$ and $a\in G(s')$. Again, $s' = X(i)$ and $a\in G(X(i))$ for some $i\in I$. As $a = F(i)(a)$, we have $s = X(i)[F(i)(a)/x] = X[F/x](i,a)$, and hence $s\in X[F/x]^*$.
\end{proof}

\begin{lemma}
    For any $I$-indexed team $X$ of $\A$ and formula $\phi$ of $\deplog$, we have $\A\models_X\forall x\phi$ if and only if $\A\models_{X[F/x]}\phi$, where $F$ is any duplicating supplement function, i.e. for each $i\in I[\eta]$, where $\eta$ is the type of $F$, $F(i)$ lists all elements of $A$ (possibly with repetition).
\end{lemma}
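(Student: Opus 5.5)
The plan is to reduce the statement to Lemma~\ref{lemma: alt semantics for universal quantifier} via Lemma~\ref{lemma: indexing does not change semantics}. (Note that ``for each $i\in I[\eta]$'' should be read as ``for each $i\in I$''.) Let $F$ be a duplicating supplement function of type $\eta$, so that for every $i\in I$ the set $\{F(i)(j)\mid j\in\eta(i)\}$ equals $A$.

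\textbf{Step 1.} First I show that $X[F/x]^* = X^*[A/x]$. An element of the left side has the form $X(i)[F(i)(j)/x]$, which lies in $X^*[A/x]$. Conversely, an element of the right side has the form $s[a/x]$ with $s = X(i)$ and $a\in A$. Since $F(i)$ lists all of $A$, there is $j\in\eta(i)$ with $F(i)(j)=a$, so $s[a/x] = X[F/x](i,j)$.

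\textbf{Step 2.} I then chain the equivalences. By Lemma~\ref{lemma: indexing does not change semantics}, $\A\models_X\forall x\phi$ iff $\A\models_{X^*}\forall x\phi$ in ordinary team semantics. By the ordinary semantics of $\forall$, this holds iff $\A\models_{X^*[A/x]}\phi$. By Step~1 this is the same as $\A\models_{X[F/x]^*}\phi$, and applying Lemma~\ref{lemma: indexing does not change semantics} again (to the $I[\eta]$-indexed team $X[F/x]$) this holds iff $\A\models_{X[F/x]}\phi$.

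\textbf{The main obstacle.} The catch is the first equivalence. Lemma~\ref{lemma: indexing does not change semantics} was proved by induction with the universal case dismissed as ``dual'' via Lemma~\ref{lemma: alt semantics for universal quantifier}. If one prefers not to rely on that, the universal case of $\A\models_X\forall x\phi\iff\A\models_{X^*}\forall x\phi$ can be checked directly.

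For the forward direction, assume the indexed condition holds for all $F$. Apply it to one duplicating $F$, which exists: take $\eta(i)=A$ and $F(i)(a)=a$. By induction on $\phi$, $\A\models_{X[F/x]^*}\phi$, and Step~1 gives $\A\models_{X^*[A/x]}\phi$.

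For the backward direction, given any indexed supplement $G$ of type $\eta$, the team $X[G/x]^*$ is a subset of $X^*[A/x]$. Downward closure (the Fact) together with the induction hypothesis on $\phi$ then gives $\A\models_{X[G/x]}\phi$.

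With this in place, the argument of Step~2 goes through as stated.
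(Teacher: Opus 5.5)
Your proof is correct and is essentially the paper's argument. The paper's proof only says to combine Lemma~\ref{lemma: alt semantics for universal quantifier} with Lemma~\ref{lemma: indexing does not change semantics}; your identity $X[F/x]^* = X^*[A/x]$ and your downward-closure check of the universal case are exactly the details that one-line combination leaves implicit, and your reading of ``$i\in I[\eta]$'' as ``$i\in I$'' is the right one.
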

\begin{proof}
    Combine Lemmas~\ref{lemma: alt semantics for universal quantifier} and~\ref{lemma: indexing does not change semantics}.
\end{proof}

Next we define an auxiliary game that we use to prove that $\EFdep_n(\A,\B)$ captures equivalence in $\deplog$. It is an indexed variant of the EF game introduced in~\cite{MR2351449} for $\deplog$, without the disjunction moves.
Additionally, positions in the game of~\cite{MR2351449} are teams, but in the game $\EFdep_n(\A,\B)$, moves are supplement functions. This way, moves of the game are the same kind of objects as the ones used to define the semantics of quantifiers, making the game more analogous to the classic EF game.

\begin{definition}
    Let $\A$ and $\B$ be $L$-structures, $I$ a set, $D = \{v_k \mid k<\alpha\}$ for some ordinal $\alpha$, and $X$ and $Y$ $I$-indexed teams of $\A$ and $\B$, respectively, with domain $D$. The game $\EF^\deplog_n((\A,X),(\B,Y))$ has two players, $\playerone$ and $\playertwo$, and $n$ rounds. Denote $I_0 = I$, $X_0 = X$ and $Y_0 = Y$. On round $k$, $\playerone$ chooses a supplement function $F_k$ for either $\A$ or $\B$, of some type $\eta_k$ such that $\dom(\eta_k) = I_k$. Then $\playertwo$ chooses a supplement function $G_k$ for $\B$ if $F_k$ was for $\A$ and for $\B$ otherwise, so that the type of $G_k$ is also $\eta_k$. Then $I_{k+1} \coloneqq I_k[\eta_k]$, $X_{k+1} = X_k[F_k/v_{\alpha+k}]$ and $Y_{k+1} = Y_k[G_k/v_{\alpha+k}]$. Denote by $H_k^\A$ the move that was for $\A$ and by $H_k^\B$ the one that was for $\B$. After all $n$ rounds have been played, look at the $I_n$-indexed teams
    \begin{align*}
        X_n &= X[H^\A_0/v_\alpha]\dots[H^\A_{n-1}/v_{\alpha+n-1}] \quad\text{and} \\
        Y_n &= Y[H^\B_0/v_\alpha]\dots[H^\B_{n-1}/v_{\alpha+n-1}].
    \end{align*}
    Then $\playertwo$ wins if for all atomic and negated atomic formulas $\phi$ (including dependence atoms), with free variables in $\{v_k \mid k<\alpha+n\}$, we have
    \[
        \A\models_{X_n}\phi \implies \B\models_{Y_n}\phi.
    \]
\end{definition}

\begin{lemma}\label{lemma: from logic to auxiliary game}
    For all $I$ and $I$-indexed teams $X$ and $Y$ of $\A$ and $\B$, respectively, with domain $D_\alpha$ for some $\alpha$, if $(\A,X)\posEquiv^\deplog_n(\B,Y)$, then
    \[
        \playertwo\wins\EF^\deplog_n((\A,X),(\B,Y)).
    \]
\end{lemma}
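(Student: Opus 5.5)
We argue by induction on $n$, simultaneously for all index sets $I$, all ordinals $\alpha$ and all pairs of $I$-indexed teams $X,Y$ with domain $D_\alpha$. The strategy of $\playertwo$ will preserve the invariant $(\A,X_k)\posEquiv^\deplog_{n-k}(\B,Y_k)$ from round to round.

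For $n=0$ there is nothing to play. The hypothesis $(\A,X)\posEquiv^\deplog_0(\B,Y)$ covers every atomic formula, negated atomic formula and dependence atom with free variables in $D_\alpha$. By Lemma~\ref{lemma: indexing does not change semantics}, the relation $\posEquiv^\deplog$ defined for ordinary teams transfers to indexed teams. So the winning condition of $\EF^\deplog_0((\A,X),(\B,Y))$ holds directly.

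For the induction step, suppose $(\A,X)\posEquiv^\deplog_{n+1}(\B,Y)$ and $\playerone$ plays $F_0$ of type $\eta_0$. It suffices to produce a response $G_0$ of type $\eta_0$ with
\[
(\A,X[H^\A_0/v_\alpha])\posEquiv^\deplog_n(\B,Y[H^\B_0/v_\alpha]).
\]
The induction hypothesis, applied with $I[\eta_0]$ and $\alpha+1$, then gives $\playertwo$ a winning strategy for the remaining $n$ rounds. The variables $v_{\alpha+1+k}$ in the subgame are exactly the variables $v_{\alpha+k+1}$ of the original game, so the two games match.

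\emph{Case 1: $F_0$ is for $\A$.} Assume for contradiction that no $G_0$ of type $\eta_0$ works. Then for each such $G$ there is a formula $\phi_G$ of rank $\le n$ with $\A\models_{X[F_0/v_\alpha]}\phi_G$ but $\B\not\models_{Y[G/v_\alpha]}\phi_G$. There are only finitely many formulas of rank $\le n$ in the finitely many variables $v_0,\dots,v_\alpha$, up to logical equivalence; here I would assume $L$ is finite, or else reduce to finite sublanguages, and argue by fixing representatives. So we may take $\Phi=\bigwedge_G\phi_G$ as a single finite conjunction, which still has rank $\le n$. Then $\A\models_X\exists v_\alpha\Phi$, and transfer gives $\B\models_Y\exists v_\alpha\Phi$. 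This yields some supplement function $G$ with $\B\models_{Y[G/v_\alpha]}\Phi$, hence $\B\models_{Y[G/v_\alpha]}\phi_G$, a contradiction.

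The delicate point is that the witnessing $G$ has an arbitrary type, while $\playertwo$ must respond with type $\eta_0$. I handle this with Lemma~\ref{lemma: indexing does not change semantics}: truth depends only on the underlying team $Y[G/v_\alpha]^*$. For each $i$, the set of values $G$ assigns at $i$ is nonempty, so we can re-list it as a function on $\eta_0(i)$, possibly with repetitions. Since $\eta_0(i)$ is itself nonempty, this listing can be made surjective whenever $|\eta_0(i)|$ is at least the number of values.

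If $\eta_0(i)$ is too small for a surjective listing, we pass to a subset of the values at $i$. This only shrinks the team, so downward closure (Fact) preserves truth, and truth depends only on the underlying team. Any function $\eta_0(i)\to$ (values of $G$ at $i$) therefore still satisfies $\Phi$. This re-listing step is the part I expect to need the most care.

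\emph{Case 2: $F_0$ is for $\B$.} Symmetrically, suppose every $G$ of type $\eta_0$ for $\A$ admits $\phi_G$ with $\A\models_{X[G/v_\alpha]}\phi_G$ and $\B\not\models_{Y[F_0/v_\alpha]}\phi_G$. Passing to finitely many representatives, take the disjunction-free combination $\Psi=\bigvee_G\phi_G$? This fails because disjunction splits teams. Instead I would choose the responses so that the uniform formula $\forall v_\alpha\Phi$ applies. Set $\Phi=\bigwedge\{\phi : \phi$ of rank $\le n$, $\B\not\models_{Y[F_0/v_\alpha]}\phi\}$; this is again finite up to equivalence. If some $G$ satisfies $\A\models_{X[G/v_\alpha]}\phi$ only when $\B\models_{Y[F_0/v_\alpha]}\phi$, then $G$ works.

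If no such $G$ exists, I would use the dual form of $\forall$ given by Lemma~\ref{lemma: alt semantics for universal quantifier} and the preceding lemma on duplicating supplement functions. Playing a duplicating $G$ on $\A$, which is a maximal team, makes the formulas true at that team true at every supplemented subteam. The transfer of $\forall v_\alpha\theta$ from $X$ to $Y$ should then yield the required implication for $Y[F_0/v_\alpha]$. The key facts are that $Y[F_0/v_\alpha]^*\subseteq Y[B/v_\alpha]$ and that downward closure holds. Matching the type $\eta_0$ of this duplicating response is handled by the same re-listing argument as in Case 1.
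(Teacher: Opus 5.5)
Your Case~1 is essentially the paper's argument. The paper conjoins all rank-$\le n$ formulas true in $X[F_0/v_\alpha]$, transfers $\exists v_\alpha\Phi$, and shrinks the witness to type $\eta_0$ by downward closure, just as your re-listing does. However, your reason for treating the conjunction as a genuine formula is false in this setting. Because $\qr$ does not count $\lor$, the $k$-fold disjunction of $\dep(v_0,v_1)$ says that each value of $v_0$ has at most $k$ values of $v_1$. So there are infinitely many pairwise inequivalent rank-$0$ formulas even for finite $L$. Also, $\alpha$ may be infinite, and you cannot pass to finite sublanguages, because the winning condition ranges over all atoms of $L$. The paper's infinitary $\Phi$ has the same defect.

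The decisive gap is Case~2. A duplicating response has type $\eta_0$ only if $|\eta_0(i)|\ge|A|$ for every $i$. When it does, your idea works and needs no conjunction: $\A\models_{X[G/v_\alpha]}\phi$ gives $\A\models_X\forall v_\alpha\phi$, hence $\B\models_Y\forall v_\alpha\phi$, hence $\B\models_{Y[F_0/v_\alpha]}\phi$. But ``the same re-listing argument as in Case~1'' does not transfer. In Case~1 you shrank the $\B$-side team, which downward closure makes harmless. Here you would shrink the $\A$-side team, which makes $\A$ satisfy \emph{more} formulas.

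This cannot be repaired, because the statement is false. Take the empty vocabulary, $A=\{a_0,a_1\}$, $B=\{b_0,b_1\}$, $I=\{0,1\}$, $\alpha=1$, $X(i)=\{(v_0,a_i)\}$ and $Y(i)=\{(v_0,b_0)\}$. If $\A\models_X\phi$, then by Lemma~\ref{lemma: indexing does not change semantics} and downward closure $\phi$ holds in the singleton team $\{v_0\mapsto a_0\}$. The isomorphism $a_j\mapsto b_j$ then gives $\B\models_Y\phi$, so $(\A,X)\posEquiv^\deplog_n(\B,Y)$ for every $n$. Yet $\playertwo$ has no winning strategy in $\EF^\deplog_1((\A,X),(\B,Y))$. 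Let $\playerone$ play for $\B$ with $\eta_0(i)=\{0\}$ and $F_0(i)(0)=b_i$. Whatever $G_0$ of type $\eta_0$ $\playertwo$ answers, $v_0$ takes distinct values at the two indices of $X_1$, so $\A\models_{X_1}\dep(v_0,v_1)$. In $Y_1$, however, $v_0$ is constant while $v_1$ takes both values $b_0,b_1$, so $\B\not\models_{Y_1}\dep(v_0,v_1)$.

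The paper's proof has the same hole: it dismisses this case as ``symmetric'', which it is not, because $\playertwo$'s answer is capped by the type $\playerone$ chose. The failure occurs even from $X=Y=N$. For $|A|=\aleph_1$ and $|B|=\aleph_0$ in the empty vocabulary, $\A\posEquiv^\deplog\B$ (via the translation into existential second-order logic and L\"owenheim--Skolem). Yet $\playerone$ wins the $2$-round game. He first enumerates $A$ injectively, which forces a collision in $\playertwo$'s answer, and then separates the two colliding indices with a move in $\B$.
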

\begin{proof}
    We prove the claim by induction on $n$. First, fix $X$ and $Y$ and suppose that $(\A,X)\posEquiv^\deplog_0(\B,Y)$. This means that for all atomic and negated atomic formulas $\phi$,
    \[
        \A\models_X\phi \implies \B\models_Y\phi.
    \]
    As this is the winning condition of $\playertwo$ in $\EF^\deplog_0((\A,X),(\B,Y))$, the only strategy of $\playertwo$ in this $0$-round game is winning.

    Then assume, as the induction hypothesis, that for any $X$ and $Y$, if $(\A,X)\posEquiv^\deplog_n(\B,Y)$, then
    \[
        \playertwo\wins\EF^\deplog_n((\A,X),(\B,Y)).
    \]
    Now fix $X$ and $Y$ and suppose that $(\A,X)\posEquiv^\deplog_{n+1}(\B,Y)$. Let us play $\EF^\deplog_{n+1}((\A,X),(\B,Y))$. On the first round, $\playerone$ plays a supplement function $F_0$. Suppose that $F_0 = H^\A_0$ (the case $F_0 = H^\B_0$ is symmetric). Now, let
    \[
        \Phi = \bigwedge\{\phi(\vec x) \mid \vec x\in D_{\alpha+1}^{<\omega}, \qr(\phi)\leq n, \A\models_{X[F_0/v_\alpha]}\phi \}.
    \]
    Clearly $\A\models_X\exists v_\alpha\Phi$ and $\qr(\exists v_\alpha\Phi) \leq n+1$. Thus, $\B\models_Y\exists v_\alpha\Phi$, whence there is some $G_0$ such that $\B\models_{Y[G_0/v_\alpha]}\Phi$. By downward closedness, we may assume that the type of $G_0$ is the same as the type of $F_0$. By the definition of $\Phi$, we have $\B\models_{Y[G_0/v_\alpha]}\phi$ for all $\phi$ of quantifier rank $\leq n$ such that $\A\models_{X[F_0/v_\alpha]}\phi$. Hence
    \[
        (A,X[F_0/v_\alpha]) \posEquiv^\deplog_n (\B,Y[G_0/v_\alpha]),
    \]
    which, by the induction hypothesis, means that
    \[
        \playertwo\wins\EF^\deplog_n((A,X[F_0/v_\alpha]),(\B,Y[G_0/v_\alpha])).
    \]
    Hence, in $\EF^\deplog_{n+1}((\A,X),(\B,Y))$, if $\playertwo$ plays $G_0$ as a response to $\playerone$ playing $F_0$ on the first round, she has a winning strategy from that round onwards. Hence she wins the play.
\end{proof}

We could also prove a converse for the previous result, but that is not without complications, as we have dropped the disjunction move that was included in the game of~\cite{MR2351449}. If we redefine quantifier rank to increase with disjunction and add the disjunction move to the game, then we obviously recover the results of~\cite{MR2351449}.

\section{The Auxiliary Game vs. the Uniform Game}\label{Auxiliary game vs. uniform game}

In this section, we connect the auxiliary game defined in Section~\ref{An Auxiliary Game} to our new game of Section~\ref{The new game}. We will then leverage this connection in Section~\ref{Uniform game vs. logic} to prove our main result.

The following lemma formulates the consistency and uniformity conditions of $\EFdep$ in terms of indexed teams.

\begin{lemma}\label{lemma: characterisation of the uniform game}\quad
    \begin{enumerate}
        \item Suppose that $I$ is a set and
        \[
            P = \{ ((x_0^i,\W_0,\U^+_0,\U^-_0),y_0^i,\dots,(x_{n-1}^i,\W_{n-1},\U^+_{n-1},\U^-_{n-1}),y_{n-1}^i) \mid i\in I \}
        \]
        is a set of plays of $\EFdep_n(\A,\B)$. Denote
        \[
            a_k^i =
            \begin{cases}
                x_k^i & \text{if $x_k\in A$}, \\
                y_k^i & \text{otherwise},
            \end{cases}
        \]
        and let $X$ be the $I$-indexed team
        \[
            i\mapsto\{(v_k,a^i_k) \mid k<n\}, \quad i\in I
        \]        
        of $\A$. Then $\playerone$ has played the plays of $P$ (pairwise) consistently if and only if
        \begin{enumerate}
            \item for all $i,j\in I$ and $k<n$, $x_k^i\in A$ if and only if $x_k^j\in A$,
            \item for all $k<n$ and $\phi\in\U^+_k$, $\A\models_X\phi$,
            \item for all $k<n$ and $\phi\in\U^-_k$, $\A\models_X\neg\phi$,
            and
            \item for all $k<n$ and $W\in\W_k$,
            \[
                \A\models_X\dep(\vec x, v_k),
            \]
            where $\vec x$ enumerates $\{v_l \mid l\in W\}$.
        \end{enumerate}

        \item A strategy $\tau$ of $\playertwo$ in $\EFdep_n(\A,\B)$ is uniformly winning if and only if the following holds. Suppose that $I$ is a set and
        \[
            \{ ((x_0^i,\W_0,\U^+_0,\U^-_0),y_0^i,\dots,(x_{n-1}^i,\W_{n-1},\U^+_{n-1},\U^-_{n-1}),y_{n-1}^i) \mid i\in I \}
        \]
        is a set of plays of $\EFdep_n(\A,\B)$ such that $\playertwo$ has used $\tau$ and $\playerone$ has played consistently. Denote
        \[
            a_k^i =
            \begin{cases}
                x_k^i & \text{if $x_k\in A$}, \\
                y_k^i & \text{otherwise},
            \end{cases}
            \quad\text{and}\quad
            b_k^i =
            \begin{cases}
                x_k^i & \text{if $x_k\in B$}, \\
                y_k^i & \text{otherwise}.
            \end{cases}
        \]
        Let $X$ be the $I$-indexed team
        \[
            i\mapsto\{(v_k,a^i_k) \mid k<n\}, \quad i\in I,
        \]
        of $\A$ and $Y$ the $I$-indexed team
        \[
            i\mapsto\{(v_k,b^i_k) \mid k<n\}, \quad i\in I,
        \]
        of $\B$. Then,
        \begin{enumerate}
            \item for all $k<n$ and $\phi\in\U^+_k$, $\B\models_Y\phi$,
            \item for all $k<n$ and $\phi\in\U^-_k$, $\B\models_Y\neg\phi$, and
            \item for all $k<n$ and $W\in\W_k$,
            \[
                \B\models_Y\dep(\vec x,v_k),
            \]
            where $\vec x$ enumerates $\{v_l \mid l\in W\}$.
        \end{enumerate}
    \end{enumerate}
\end{lemma}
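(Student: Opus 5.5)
Both parts are unwound directly from the definitions. Throughout, I use that for an $I$-indexed team the truth of an atom or negated atom is checked assignment by assignment, and that a dependence atom $\dep(\vec x,v_k)$ fails in an indexed team exactly when some pair $i,j\in I$ agrees on $\vec x$ but not on $v_k$. I first note that a single play $p$ is consistent with itself iff $\A\models\phi(\vec a)$ for all $\phi\in\bigcup_k\U^+_k$ and $\A\models\neg\phi(\vec a)$ for all $\phi\in\bigcup_k\U^-_k$. The other consistency clauses are automatic for $p'=p$.

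For (i), fix $i,j\in I$. Clause (a) of the definition of consistency holds because every play in $P$ carries the same $\W_k,\U^\pm_k$. Clause (b) of the definition is condition (a) of (i). Clauses (c) and (d) of the definition, applied to all pairs $(i,j)$, say that every assignment $X(i)$ satisfies each $\phi\in\U^+_k$, resp.\ $\neg\phi$ for $\phi\in\U^-_k$. By the indexed semantics this is exactly $\A\models_X\phi$, resp.\ $\A\models_X\neg\phi$. Clause (e) of the definition, applied to all pairs, is the pairwise form of $\A\models_X\dep(\vec x,v_k)$. Each step is reversible, which gives the equivalence.

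For (ii), left to right: let $\tau$ be uniformly winning and let $P$ be a pairwise consistent family played with $\tau$. Each play is consistent with itself, so by the remark above $\A$ satisfies all committed atoms and negated atoms at $\vec a^i$. Since $\tau$ is winning, $\B$ does too at $\vec b^i$, which gives (a) and (b) for $Y$ pointwise. Uniformity applied to each pair $i,j$ gives the pairwise dependence condition, i.e.\ (c).

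For (ii), right to left: assume (a)--(c) hold for every consistent family played with $\tau$. Applying (c) to two-element families $I=\{0,1\}$ gives the uniformity clause of the definition verbatim. For winning, take any play $p$ according to $\tau$. If $p$ is consistent with itself, apply (a) and (b) to the singleton family $\{p\}$. This yields $\B\models\phi(\vec b)$ for all $\phi\in\U^+_k$ and $\B\models\neg\phi(\vec b)$ for all $\phi\in\U^-_k$, which is stronger than the winning condition.

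\textbf{Main obstacle.} The hard case is a play in which $\playerone$ breaks some atomic commitment, i.e.\ $\A$ falsifies some $\phi\in\U^+_k$ or satisfies some $\phi\in\U^-_k$. The winning condition still constrains the atoms $\A$ does respect, but no consistent family contains $p$, so (a) and (b) say nothing about $p$. Moreover, $\tau$ may respond differently to a move sequence with smaller $\U$-sets, so I cannot replace $p$ by a ``trimmed'' consistent play.

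What I would try first is to read the winning condition in the intended commitment sense, as the paper's discussion of $\U^\pm_k$ suggests: a commitment is only required to be preserved when $\playerone$ keeps all his commitments in that play. Under that reading the singleton argument covers every case, and the equivalence is exactly as stated. If the literal winning condition must be kept, the converse as worded only yields winning on self-consistent plays. I would then record this explicitly as the precise content of (ii), since on inconsistent plays conditions (a)--(c) carry no information.
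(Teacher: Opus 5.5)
Your handling of (i), of the ``only if'' half of (ii), and of uniformity in the ``if'' half via two-element families is the paper's argument. Your reading of ``pairwise'' as including $i=j$ is the one part (i) needs. The obstacle you isolate is genuine, and it is exactly where the paper's own proof fails. To get winning, the paper fixes one $\phi\in\U^+_k$ with $\A\models\phi(a_0,\dots,a_{n-1})$, passes to the singleton family, and asserts ``Clearly $\playerone$ has played the set of plays pairwise consistently''. That is true only if the play respects every atom of $\bigcup_k\U^+_k$ and every negated atom of $\bigcup_k\U^-_k$. No reading of ``pairwise'' saves the step. If only distinct pairs count, singletons are vacuously consistent, but then (i) and the ``only if'' half of (ii) fail for $|I|=1$: neither $\A\models_X\phi$ nor $\B\models_Y\phi$ need hold for a committed atom $\phi$ that the play breaks.

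In fact the ``if'' half of (ii) is false under the literal winning condition, so the obstacle cannot be overcome by any argument, only by amending the statement. Here is a counterexample.
\begin{itemize}
\item Let $L=\{P,Q\}$ with $P,Q$ unary and $n=1$. Let $\A$ and $\B$ be disjoint copies, via an isomorphism $\pi\colon A\to B$, of a structure realising all four $\{P,Q\}$-types.
\item Let $\tau$ answer every move with its image under $\pi$ or $\pi^{-1}$. The one exception: to the move $(a,\emptyset,\{P(v_0),Q(v_0)\},\emptyset)$ with $a\in P^\A\setminus Q^\A$, it answers with some $b\notin P^\B$.
\item That play is lost, so $\tau$ is not winning.
\item The play is not consistent with itself, so it lies in no consistent family. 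Hence every consistent family of $\tau$-plays satisfies $Y(i)=\pi\circ X(i)$ for all $i$.
\item Then (a)--(c) follow from part (i), because $\pi$ is an injective isomorphism.
\end{itemize}

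Both of your repairs are therefore appropriate. Under the commitment-sense winning condition, your singleton argument proves the lemma verbatim. Keeping the literal condition instead forces you either to add ``$\tau$ is winning'' to the right-hand side or to weaken ``winning'' to self-consistent plays. The commitment-sense version is the better choice for the paper as a whole. The main theorem applies the lemma only to consistent families. Likewise, the proof of Lemma~\ref{lemma: from auxiliary game to uniform game} only controls $\playertwo$'s answers on plays in which $\playerone$ keeps his commitments.
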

\begin{proof}\quad
    \begin{enumerate}
        \item We show that $\playerone$ has played $P$ pairwise consistently if and only if the conditions of the statement of the lemma hold.
        \begin{enumerate}
            \item This is the same condition as in the definition of $\playerone$ playing consistently.
            \item Fix $k<n$. By flatness and the fact that $a^i_l = X(i)(v_l)$ for all $l<n$, for any $\phi(v_0,\dots,v_{n-1})\in\U^+_k$, we have
            \[
                \text{$\A\models\phi(a_0^i,\dots,a_{n-1}^i)$ for all $i\in I$} \iff \A\models_X\phi.
            \]
            \item The case for $\U^-_k$ is similar to the previous item.
            \item Fix $k<n$ and $W\in\W_k$ and let $\vec x$ enumerate $\{v_l \mid l\in W\}$. As $X(i)(v_l) = a^i_l$ for all $l<n$, we have
            \begin{align*}
                &\forall i,j\in I \bigg[ \forall l\in W (a^i_l = a^j_l) \implies a^i_k = a^j_k \bigg] \\
                &\iff \forall i,j\in I \bigg[ \forall l\in W (X(i)(v_l) = X(j)(v_l)) \\
                &\hspace{8em}\implies X(i)(v_k) = X(j)(v_k) \bigg] \\
                &\iff \A\models_X\dep(\vec x, v_k).
            \end{align*}
        \end{enumerate}

        \item Let $\tau$ be uniformly winning. Let $I$, $x^i_l$, $\W_l$, $\U^+_l$, $\U^-_l$, $y^i_l$, $X$ and $Y$ be as in the statement of the lemma and that $\playerone$ has played consistently. Thus, as $\tau$ wins uniformly, we have
        \[
            \text{$b^i_l = b^j_l$ for all $l\in W$} \implies b^i_k = b^j_k
        \]
        for all $i,j\in I$, $k<n$ and $W\in\W_k$. As we have $b^i_j = Y(i)(v_j)$ for all $i\in I$ and $j<n$, we immediately obtain $\B\models_Y\dep(\vec x_W,v_k)$ for each $W\in\W_k$. Fix $\phi\in\U^+_k$. As $\tau$ is winning, we have
        \[
            \A\models\phi(a^i_0,\dots,a^i_{n-1}) \implies \B\models\phi(b^i_0,\dots,b^i_{n-1})
        \]
        for all $i\in I$. As $\playerone$ has played consistently, we have $\A\models\phi(a^i_0,\dots,a^i_{n-1})$. Thus we obtain $\B\models\phi(b^i_0,\dots,b^i_{n-1})$. By flatness, $\B\models_Y\phi$. The case for $\phi\in\U^-_k$ is similar.
    
        For the converse, suppose that for any $I$, $x^i_l$, $\W_l$, $\U^+_l$, $\U^-_l$, $y^i_l$, $X$ and $Y$ that are as above, the conditions of the statement of the lemma hold. We show that $\tau$ is uniformly winning. To first show that $\tau$ is winning, let 
        \[
            ((x^0_0,\W_0,\U^+_0,\U^-_0),y^0_0,\dots,(x^0_{n-1},\W_{n-1},\U^+_{n-1},\U^-_{n-1}),y^0_{n-1})
        \]
        be a play where $\playertwo$ has used $\tau$. Now, fix $k<n$ and $\phi(v_0,\dots,v_{n-1})\in\U^+_k$ and suppose that $\A\models\phi(a_0,\dots,a_{n-1})$. Let $I = \{0\}$ and define $x^0_l = x_l$ and $y^0_l = y_l$ and let $X$ and $Y$ be the $I$-indexed teams made from the singleton set
        \[
            \{ ((x^0_0,\W_0,\U^+_0,\U^-_0),y^0_0,\dots,(x^0_{n-1},\W_{n-1},\U^+_{n-1},\U^-_{n-1}),y^0_{n-1}) \}
        \]
        of plays. Clearly $\playerone$ has played the set of plays pairwise consistently. By applying the statement of the lemma, we obtain $\B\models_Y\phi$, which by flatness means $\B\models\phi(b_0,\dots,b_{n-1})$. The treatment of any $\phi\in\U^-_k$ is similar.
        
        In order to show that $\tau$ wins uniformly, let
        \[
            ((x_0,\W_0,\U^+_0,\U^-_0),y_0,\dots,(x_{n-1},\W_{n-1},\U^+_{n-1},\U^-_{n-1}),y_{n-1})
        \]
        and
        \[
            ((\hat x_0,\hat\W_0,\hat\U^+_0,\hat\U^-_0),\hat y_0,\dots,(\hat x_{n-1},\hat\W_{n-1},\hat\U^+_{n-1},\hat\U^-_{n-1}),\hat y_{n-1})
        \]
        be two plays where $\playertwo$ has used $\tau$ and $\playerone$ has played consistently. It follows that $\W_l = \hat\W_l$, $\U^+_l = \hat\U^+_l$ and $\U^-_l = \hat\U^-_l$ for all $l<n$. Fix $k<n$ and $W\in\W_k$, and suppose that $b_l = \hat b_j$ for all $l\in W$. Now, let $I = \{0,1\}$ and define $x^0_l = x_l$, $y^0_l = y_l$, $x^1_l = \hat x_l$ and $y^1_l = \hat y_j$. Let $X$ and $Y$ consist of the move sequences of the two players as before. As $\playerone$ has played consistently, we have $\A\models_X\dep(\vec x_W,v_k)$ for all $W\in\W_k$. We may apply our initial assumption to obtain $\B\models_Y\dep(\vec x_W,v_k)$ for all $W\in\W_k$. This means that for all $W\in\W_k$,
        \[
            \text{$Y(0)(v_l) = Y(1)(v_l)$ for all $l\in W$} \implies Y(0)(v_k) = Y(1)(v_k).
        \]
        As $Y(0)(v_l) = b^0_l = b_l = \hat b_l = b^1_l = Y(1)(v_l)$ for all $l\in W$, we must also have $b_k = b^0_k = Y(0)(v_k) = Y(1)(v_k) = b^1_k = \hat b_k$. This concludes the proof. \qedhere
    \end{enumerate}
\end{proof}

\begin{lemma}\label{lemma: from auxiliary game to uniform game}
    Denote by $N$ the $1$-indexed team $0\mapsto\emptyset$, and let $n<\omega$. If
    \[
        \playertwo\wins\EF_n^\deplog((\A,N),(\B,N)),
    \]
    then
    \[
        \playertwo\winsu\EFdep_n(\A,\B).
    \]
\end{lemma}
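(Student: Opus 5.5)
The idea is to simulate $\playertwo$'s winning strategy $\sigma$ in $\EF^\deplog_n((\A,N),(\B,N))$ along an auxiliary play in which $\playerone$ keeps track of all possible histories at once. From that simulation we read off a single element-by-element strategy $\tau$ in $\EFdep_n(\A,\B)$. We then check uniform winning through the team reformulation in Lemma~\ref{lemma: characterisation of the uniform game}(ii), which lets us work with arbitrary sets of plays rather than pairs.

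\textbf{Construction.} Fix a sequence of commitments $(\W_k,\U^+_k,\U^-_k)_{k<n}$ together with the side (in $A$ or in $B$) chosen on each round. Since consistently played plays share all of these data, $\tau$ may depend on the commitments and sides revealed so far. We build an auxiliary play in which the index set $I_k$ consists of all histories of element moves of length $k$:
\begin{itemize}
\item on round $k$, if $\playerone$ moves in $\A$, the auxiliary $\playerone$ plays the duplicating supplement function $F_k$ of type $\eta_k(i)=A$ with $F_k(i)(a)=a$;
\item $\playertwo$ answers with $G_k=\sigma(\dots)$ of the same type;
\item the case where $\playerone$ moves in $\B$ is symmetric.
\end{itemize}
We then set $\tau(\text{history } i,\ x_k=a)\coloneqq G_k(i)(a)$. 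By construction, every play of $\EFdep_n$ in which $\playertwo$ follows $\tau$ is a coordinate of the final indexed teams $X_n,Y_n$. More generally, every set $P$ of such plays determines a sub-indexed pair $(X_n\restriction J, Y_n\restriction J)$ that coincides with the teams $X,Y$ of Lemma~\ref{lemma: characterisation of the uniform game}.

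\textbf{Main obstacle.} The winning condition of the auxiliary game only transfers atoms, including dependence atoms, that hold in the whole team $X_n$. Lemma~\ref{lemma: characterisation of the uniform game} instead requires transfer from the subteam $X_n\restriction J$ determined by a consistently played set $P$. With fully duplicating moves, the whole team $X_n$ satisfies essentially no dependence atoms, so this transfer is genuinely needed. To get it, I would modify the auxiliary moves of $\playerone$ so that the final $\A$-team is exactly the team of consistent histories for the fixed commitments. Concretely, on round $k$ the auxiliary $\playerone$ supplements each $i$ only with those $a$ that keep the history extendable to one satisfying the declared $\U^\pm$-atoms and dependencies. Using downward closedness and Lemma~\ref{lemma: indexing does not change semantics}, I would then argue that $\sigma$'s answers restricted to any consistent $J$ still satisfy the transferred atoms. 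This restriction step is the subtle point. Dependence atoms are not closed under unions, so there may be no single maximal consistent team.

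\textbf{Fallback.} If the restriction step fails, I would let the auxiliary $\playerone$ index by pairs (consistent set $P$, history), running the simulations for all candidate consistent teams in parallel inside one indexed team. I would then use that dependence atoms are checked pairwise: by Lemma~\ref{lemma: characterisation of the uniform game}, sets with $|I|\le 2$ suffice. Once transfer to every consistent $J$ is established, conditions (a)–(c) of Lemma~\ref{lemma: characterisation of the uniform game}(ii) follow directly, because $\U^\pm_k$-atoms and $\dep(\vec x_W,v_k)$ hold in $X_n\restriction J$ by part (i) of that lemma. This yields $\playertwo\winsu\EFdep_n(\A,\B)$.
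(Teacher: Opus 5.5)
Your diagnosis is right, but the proposal never closes the gap it identifies. Under the paper's definitions the gap cannot be closed, because the lemma is false.

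\textbf{The missing step.} You need the auxiliary strategy's answers to transfer the $\U^\pm_k$-atoms and the atoms $\dep(\vec x_W,v_k)$ from $X_n\restriction J$ to $Y_n\restriction J$ for \emph{every} consistently played $J$. The auxiliary winning condition only speaks about the whole team.

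Pruning $\playerone$'s supplements cannot make the consistent histories form one team satisfying the dependence atoms. On a round with $x_k\in B$, the entry $a_k$ of the $\A$-team is $\playertwo$'s answer, so condition (e) of consistency constrains $\playertwo$'s moves. Meanwhile, uniformity demands that $\playerone$'s own $\B$-moves be functional.

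The fallback fails as well:
\begin{itemize}
\item Merging several consistent families into one indexed team destroys the dependence atoms in $X_n$, so the winning condition yields nothing.
\item Separate auxiliary plays give answers that depend on the whole family rather than on the single history, so they do not assemble into one strategy $\tau$.
\item Reducing to pairs does not help, since one $\tau$ must serve all pairs at once.
\end{itemize}

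\textbf{Counterexample.} Take the empty vocabulary, $|A|=2$, $|B|=3$ and $n=1$.

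In $\EF^\deplog_1((\A,N),(\B,N))$ the only non-trivial atom is $\dep(v_0)$. So $\playertwo$ wins by two rules:
\begin{itemize}
\item copy any $\A$-family injectively into $B$;
\item answer a $\B$-family with an $\A$-family that is non-constant whenever the given one is.
\end{itemize}

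In $\EFdep_1(\A,\B)$, let $\playerone$ play $x_0\in B$ with $\W_0=\{\emptyset\}$ and $\U^+_0=\U^-_0=\emptyset$. By (e), two such plays are consistent if and only if $\playertwo$'s answers agree, $a_0=a'_0$. Uniformity then demands $b_0=b'_0$, i.e.\ $x_0=x'_0$. So $\playertwo$'s strategy would have to be an injection $B\to A$, which does not exist.

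The same play also refutes direction (ii)$\Rightarrow$(i) of the main theorem. Take $\A=\omega$ and $\B=\omega_1$ as pure sets; then $\A\posEquiv^\deplog\B$ by upward L\"owenheim--Skolem for $\Sigma^1_1$, yet $\playertwo$ has no uniform winning strategy in $\EFdep_1(\A,\B)$.

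\textbf{Relation to the paper's proof.} The paper's argument follows essentially your Construction: supplement by all moves that can lead to a consistent play, then read $\tau$ off the $G_k$. It simply asserts $\A\models_{X_n}\dep(\vec x,v_k)$, and in the example above that assertion is false. So your diagnosis is sharper than the paper's argument. Still, no proof is possible without changing the game, for instance how dependence commitments on $\B$-moves are handled.
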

\begin{proof}
    Let $\tau$ be a strategy of $\playertwo$ in $\EF_n^\deplog((\A,N),(\B,N))$. We define a uniform strategy of $\playertwo$ in $\EFdep_n(\A,\B)$ in pieces for each way that $\playerone$ can play consistently. For this, fix $(i_0,\dots,i_{n-1})\in 2^n$ and, for each $k<n$, $\W_k\subseteq\Pow(n)$ and $\U^+_k,\U^-_k\subseteq\{ R(\vec x) \mid R\in L, \vec x\in\{v_0,\dots,v_{n-1}\}^{<\omega}, v_k\in\vec x\}$. Let $P$ be the set of all plays of $\EFdep_n(\A,\B)$ where $\playerone$ has played consistently and, on each round $k<n$, $\playerone$ has played the commitments $\W_k$, $\U^+_k$ and $\U^-_k$, and the element $x_k$ is in $\A$ if and only if $i_k = 0$. Now, consider the set of all possible moves $x_0^i$, $i<\alpha_0$, such that the move can lead to a play in $P$. Let $X_0 = Y_0 = N$ and $I_0 = 1$. Denote by $\eta_0$ the function with domain $1$ and $\eta_0(0) = \alpha_0$. We then define a supplement function $F_0$ of type $\eta_0$ such that $\dom(F_0) = 1$, $F_0(0) = (x_0^i)_{i<\alpha_0}$. Now, $\tau$ produces a supplement function $G_0$ of type $\eta_0$ such that $\playertwo\wins\EF_{n-1}^\deplog((\A,X_1),(\B,Y_1))$, where $I_1 = I_0[\eta_0]$ and $X_1$ and $Y_1$ are the $I_1$-indexed teams $X_0[H_0^\A/v_0]$ and $Y_0[H_0^\B/v_0]$, respectively.

    Then, for each $i\in I_1$, consider all the moves $x_1^{(i,j)}$, $j<\alpha_1$, that $\playerone$ can play so that the move sequence $(x_0^i,G_0(0)(i),x_1^{(i,j)})$ can lead to a play in $P$. Let $\eta_1$ have domain $I_1$ and $\eta_1(i) = \alpha^i_1$ for all $i\in I_1$. We define a supplement function $F_1$ of type $\eta_1$ by setting $F_1(i) = (x_1^{i,j})_{j<\alpha_1^i}$. Then $\tau$ produces $G_1$ of type $\eta_1$ such that $\playertwo\wins\EF^\deplog_{n-1}((\A,X_2),(\B,Y_2))$, where $I_2 = I_1[\eta_1]$ and $X_2$ and $Y_2$ are the $I_2$-indexed teas $X_1[H_1^\A/v_1]$ and $Y_1[H_1^\B/V_1]$.

    By continuing this way, we accumulate teams $X_n$ and $Y_n$ that consist of all possible movesets of $\playerone$ where he has played consistently and according to the specification $(i_0,\dots,i_{n-1})$, while playing $\W_k$, $\U^+_k$ and $\U^-_k$ on round $k$, and the responses of $\playertwo$ as described. As $\playerone$ has played consistently, it so happens that $\A\models_{X_n}R(\vec x)$ for all $R(\vec x)\in\bigcup_{k<n}\U^+_k$ and $\A\models_{X_n}\neg R(\vec x)$ for all $R(\vec x)\in\bigcup_{k<n}\U^-_k$ and $\A\models_{X_n}\dep(\vec x,v_k)$ for all $W\in\W_k$, where $\vec x$ enumerates $\{v_l \mid l\in W\}$. As $\tau$ is winning in $\EF_n^\deplog((\A,N),(\B,N))$, we have
    \[
        \A\models_{X_n}\phi \implies \B\models_{Y_n}\phi
    \]
    for all atomic $\phi$. Hence, the strategy where $\playertwo$ plays $G_k(i)$ on round $k$ of $\EFdep_n(\A,\B)$ as a response to $\playerone$ playing $F_k(i)$ is uniformly winning.
\end{proof}

We could prove the converse of Lemma~\ref{lemma: from auxiliary game to uniform game} with a few redefinitions. In $\EF_n^\deplog((\A,X),(\B,Y))$ the obvious strategy of $\playertwo$ would be to consider assignments of each index $i\in I$ in $X$ as moves played in $\A$ and in $Y$ as moves played in $\B$ in a single play of $\EFdep_n(\A,\B)$ and build supplement functions out of possible responses in the uniform game to moves by $\playerone$ given by the supplement function. However, we would need to consider a variant of $\deplog$ where we require that dependence atoms only occur in a form $\dep(\vec x, v_k)$, where $\vec x = (v_{i_0},\dots,v_{i_{m-1}})$ and $k>i_0,\dots,i_{m-1}$. We would also need to modify $\EFdep_n(\A,\B)$ so that $\W_i\subseteq\Pow(i)$, i.e. $\playerone$ does not make promises that his move is determined by future moves (which is possible with our current definition).

\section{The Main Result}\label{Uniform game vs. logic}

We are ready to prove the main result of this paper:

\begin{theorem}
    The following are equivalent.
    \begin{enumerate}
        \item\label{item: II wins uniformly for all n}
        $\playertwo\winsu\EFdep_n(\A,\B)$ for all $n<\omega$.
        \item\label{item: Dependence logic is preserved}
        $\A\posEquiv^\deplog\B$.
    \end{enumerate}
\end{theorem}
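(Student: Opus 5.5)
The direction (ii)$\Rightarrow$(i) combines the two lemmas already proved. Suppose $\A\posEquiv^\deplog\B$ and fix $n<\omega$. With $N$ the $1$-indexed team $0\mapsto\emptyset$, we have $(\A,N)\posEquiv^\deplog_n(\B,N)$, since $N^*=\{\emptyset\}$ and Lemma~\ref{lemma: indexing does not change semantics} shows the indexing is irrelevant. Lemma~\ref{lemma: from logic to auxiliary game} then gives $\playertwo\wins\EF^\deplog_n((\A,N),(\B,N))$, and Lemma~\ref{lemma: from auxiliary game to uniform game} converts this into $\playertwo\winsu\EFdep_n(\A,\B)$.

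For (i)$\Rightarrow$(ii), let $\phi$ be a sentence with $\A\models\phi$. By Theorem~\ref{theorem: normal form} we may take $\phi$ to be $\forall x_0\dots\forall x_{m-1}\exists y_0\dots\exists y_{n-1}(\psi\land\theta)$. We put $\theta$ in disjunctive normal form. Using the team reading of the outer quantifiers, a winning team in $\A$ can be split according to which disjunct each assignment satisfies. To avoid handling first-order disjunction inside the game, we instead add finitely many extra existentially quantified ``selector'' variables $z$, functionally dependent on $\vec x,\vec y$. Their values (drawn from the first $\forall$-moves, or two distinguished elements as in the footnote of the introduction) code which disjunct is chosen, and the conjunct becomes a pure conjunction of literals guarded by equalities. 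This rewriting is the step I expect to be the main obstacle. The simplest fallback is to avoid the normal form and argue by induction on arbitrary formulas through the team game, but then disjunction has no move. So I will try to arrange the formula as a prefix followed by a conjunction of dependence atoms and literals, using the standard trick of replacing a disjunction $\theta_1\lor\theta_2$ by $\exists u\exists u'((u=u'\land\theta_1)\lor(u\neq u'\land\theta_2))$ with $u,u'$ depending on nothing.

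Given such a formula of length $N=m+n$ and $\A\models\phi$, fix Skolem functions $f_i$ for $y_i$ in $\A$ that depend only on $\vec z_i$; these exist by the semantics. Let $\tau$ be a uniformly winning strategy of $\playertwo$ in $\EFdep_N(\A,\B)$. Player $\playerone$ plays as follows. On the rounds for $x_j$ he picks an arbitrary element $x_j\in B$, with $\W=\emptyset$. On the rounds for $y_i$ he plays in $A$ the value $f_i$ of the $a$-values of the earlier rounds indexed by $\vec z_i$, declaring $\W_k=\{\text{indices of }\vec z_i\}$. He places the positive literals of $\theta$ in the appropriate $\U^+_k$ (at the round of the largest index occurring) and the negated ones in $\U^-_k$. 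Any two such plays are consistent: the dependences hold because of the $f_i$, and the literals hold in $\A$ because $\A\models\psi\land\theta$ under the Skolem assignment.

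Now let $I$ index all such plays and read off the $B$-team $Y$. By construction $Y$ restricted to $\vec x$ is all of $B^m$, and each $y_i$-column is filled by $\playertwo$'s responses. By Lemma~\ref{lemma: characterisation of the uniform game}(ii), $\B\models_Y\dep(\vec z_i,y_i)$ for all $i$ and $\B\models_Y$ every literal of the chosen conjunction. Hence $\B\models_Y\psi\land\theta$. Since the $y_i$-values of $Y$ form supplement functions over the full universal team, and by Lemma~\ref{lemma: indexing does not change semantics} together with downward closure, we get $\B\models\phi$.
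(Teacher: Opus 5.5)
Your direction (ii)$\Rightarrow$(i) is exactly the paper's. Your strategy for $\playerone$ in (i)$\Rightarrow$(ii) is also essentially the paper's, with Skolem functions in place of the team $X^*$: universal rounds are played in $\B$, Skolem values are played in $\A$ with the matching dependence commitments, and then Lemma~\ref{lemma: characterisation of the uniform game}(ii) is applied to the resulting $\B$-team. The gap is the one you flagged yourself, and it is not closed.

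Your last step needs a \emph{single} conjunction of literals that holds in \emph{every} $\A$-play. The sets $\U^\pm_k$ must be the same in all plays. Clauses (c) and (d) of Definition~\ref{Uniform winning strategy}(i) require each committed literal to hold in every $\A$-play; otherwise the pair is not consistent and uniformity says nothing about it. Suppose different assignments satisfy different disjuncts of $\theta$. Committing the literals of all disjuncts destroys consistency. Splitting the plays by disjunct instead yields pairs with different commitments, so the union of the resulting $\B$-subteams need not satisfy $\dep(\vec z_i,y_i)$.

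Neither rewriting you propose removes the disjunction. Literals ``guarded by equalities'' are themselves disjunctions $z\neq c\lor\lambda$. The formula $\exists u\exists u'((u=u'\land\theta_1)\lor(u\neq u'\land\theta_2))$ still contains $\lor$. Worse, with $u,u'$ depending on nothing it is the Boolean disjunction: the whole team satisfies $\theta_1$ or the whole team satisfies $\theta_2$. Under $\forall\vec x$ this is strictly stronger than the team disjunction. For instance, it turns the valid $\forall x(P(x)\lor\neg P(x))$ into a sentence that fails whenever $P$ is neither empty nor the whole universe.

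So your claim that all literals of $\theta$ hold in $\A$ ``because $\A\models\psi\land\theta$ under the Skolem assignment'' is justified only when $\theta$ is one conjunction. Theorem~\ref{theorem: normal form} does not give you that.

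For comparison, the paper's proof crosses this same point by asserting that a single disjunct $i^*$ holds on all of $X^*$. Flatness only gives a satisfied disjunct for each assignment separately, so that assertion needs the same missing argument. You have located the real crux, but the proposal does not supply the idea needed to get past it.
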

\begin{proof}
    \ref{item: Dependence logic is preserved}$\implies$\ref{item: II wins uniformly for all n}: Suppose that $\A\posEquiv^\deplog\B$. By Lemma~\ref{lemma: from logic to auxiliary game},
    \[
        \playertwo\wins\EF^\deplog_n((\A,N),(\B,N))
    \]
    for all $n<\omega$, whence by Lemma~\ref{lemma: from auxiliary game to uniform game}, we have $\playertwo\winsu\EFdep_n(\A,\B)$ for all $n<\omega$.

    \ref{item: II wins uniformly for all n}$\implies$\ref{item: Dependence logic is preserved}: Suppose that $\playertwo\winsu\EFdep_n(\A,\B)$ for all $n<\omega$. Let $\phi$ be an arbitrary sentence of dependence logic. We may assume that $\phi$ is
    \[
         \forall v_0\dots\forall v_{m-1}\exists v_m\dots\exists v_{m+k-1}(\psi\land\theta),
    \]
    where $\theta$ is a  quantifier-free first-order formula and $\psi$ is
    \[
        \bigwedge_{i<k}\dep(\vec u_i, v_{m+i})
    \]
    with $\vec u_i \in\{v_j \mid j<m\}^{<\omega}$ for all $i<k$. Furthermore, we can assume that $\theta$ is in disjunctive normal form and let $\theta$ be
    \[
        \bigvee_{i<l}\left( \bigwedge_{j\in U^+_i}\theta^+_j \land \bigwedge_{j\in U^-_i}\neg\theta^-_j \right).
    \]
    Then fix a uniform winning strategy $\tau$ for $\playertwo$ in the game of length $n = m+k$. We now aim at showing that
    \[
        \A\models_N\phi \implies \B\models_N\phi,
    \]
    so suppose that $\A\models_N\phi$. Now, let $X_0 = Y_0 = N$, let $F^*_0,\dots,F^*_{m-1}$ and $G_0,\dots,G_{m-1}$ be supplement functions such that $F^*_i$ duplicates the team $X^*_i\coloneqq N[F_0/v_0]\dots[F_{i-1}/v_{i-1}]$, $G_i$ duplicates $Y_i\coloneqq N[G_0/v_0]\dots[G_{i-1}/v_{i-1}]$, $F^*_i$ and $G_i$ have the same type, and for every $b\in B$,
    \[
        \{F^*_i(j) \mid G_i(j) = b \} = A
    \]
    (i.e. every element of $\A$ occurs together with all elements of $\B$). Now,
    \[
        \A\models_{X^*_m}\exists v_m\dots\exists v_{m+k-1}(\psi\land\theta),
    \]
    so there are supplement functions $F_m,\dots,F_{m+k-1}$ such that
    \[
        \A\models_{X^*_m[F_m/v_m]\dots[F_{m+k-1}/v_{m+k-1}]}\psi\land\theta.
    \]
    Then let
    \begin{align*}
        X^* &= X^*_m[F_m/v_m]\dots[F_{m+k-1}/v_{m+k-1}] \\
        &= N[F^*_0/v_0]\dots[F^*_{m-1}/v_{m-1}][F_m/v_m]\dots[F_{m+k-1}/v_{m+k-1}]
    \end{align*}
    and let $I$ be its index set.
    Now there is $i^*<l$ such that
    \[
        \A\models_{X^*} \bigwedge_{j\in U^+_{i^*}}\theta^+_j \land \bigwedge_{j\in U^-_{i^*}}\neg\theta^-_j.
    \]
    Let $\U^+_p$ consist of all the $\theta_j^+$ such that $j\in U^+_{i^*}$, $\U^-_p$ of all the $\theta_j^-$ such that $j\in U^-_{i^*}$ and $\W_p = \{v_j \mid v_j\in\vec u_p\}$ when $m\leq p < m+k$ and $\W_p = \emptyset$ otherwise.

    Now, let us play $\EFdep_{m+k}(\A,\B)$ $|I|$-many times. In the $i^{\text{th}}$ game, on round $j<m$, we let $\playerone$ play $(Y(i)(j),\W_j,\U^+_j,\U^-_j)$ and denote by $a^i_j$ the response of $\playertwo$ produced by her uniform winning strategy. Now, we define supplement functions $F_i$ so that $F_i(j) = a^i_j$. Then let
    \[
        X = N[F_0/v_0]\dots[F_{m+k-1}/v_{m+k-1}].
    \]
    Now, $\{X(i) \mid i\in I\}\subseteq\{X^*(i) \mid i\in I\}$, so $X$ also satisfies $\psi$ and 
    \[
        \bigwedge_{j\in U^+_{i^*}}\theta^+_j \land \bigwedge_{j\in U^-_{i^*}}\neg\theta^-_j
    \]
    by downward closedness and Lemma~\ref{lemma: indexing does not change semantics}. On round $j\geq m$, we let $\playerone$ play
    $(X(i)(j),\W_j,\U^+_j,\U^-_j)$ and denote by $b^i_j$ the response of $\playertwo$. Now, consider the set of all these plays. Clearly $\playerone$ has played consistently, as demonstrated by the fact that
    \[
        \A\models_X\bigwedge_{i<k}\dep(\vec u_i,v_{m+i})\land\bigwedge_{j\in U^+_{i^*}}\theta^+_j \land \bigwedge_{j\in U^-_{i^*}}\neg\theta^-_j.
    \]
    Then denote by $Y$ the team $Y_m[G_m/v_m]\dots[G_{m+k-1}/v_{m+k-1}]$, where $G_{m+j}$ are defined so that $Y(i)(j) = b^i_j$. As $\playertwo$ wins uniformly, we have
    \[
        \B\models_Y\dep(\vec u_p,v_{m+p})\land\bigwedge_{j\in U^+_{i^*}}\theta^+_j \land \bigwedge_{j\in U^-_{i^*}}\neg\theta^-_j
    \]
    for all $p<m+k$. Thus, $\B\models_Y\psi\land\theta$, whence $\B\models_N\phi$, as desired.
\end{proof}

\section{Future Work}

Given that Ehrenfeucht--Fraïssé type games are often used to prove inexpressivity results, it would be illuminating to have more examples. For instance, could we use the game to show that finiteness is not definable in $\deplog$? Even more interesting would be to be able to show that a property not known to be undefinable in $\deplog$ is not definable.

Having to settle for the use of a normal form in the proof of our main result is somewhat unsatisfying, and we also lose the connection between the length of the game and the quantifier rank of formulas that are preserved. The alternative could be to restrict both the syntax of $\deplog$ and the commitments made in the game (and this would also reconnect quantifier rank and game length), but this also seems unsatisfactory. Is it possible to connect the length of the game to quantifier rank of the formulas in some reasonable manner?

Lemma~\ref{lemma: from auxiliary game to uniform game} utilizes the fact that $\EF^{\deplog}_n((\A,X),(\B,Y))$ is symmetric, which is made possible by Lemma~\ref{lemma: alt semantics for universal quantifier}---a result that does not necessarily hold if one moves from $\deplog$ to some other logic that is not downward closed. The main result also uses some amount of downward closedness. Therefore the results of this paper cannot immediately be lifted to, say, inclusion logic. Thus, a question arises: can a similar game be found for other team logics, most notably inclusion logic and independence logic?

\bibliographystyle{plain}
\bibliography{efgame}
\end{document}